\newcommand{\si}{{\sigma}}
\newcommand{\al}{{\alpha}}
\newcommand{\eps}{{\varepsilon}}
\newcommand{\om}{{\omega}}
\newcommand{\bbQ}{{\mathbb Q}}
\newcommand{\bbZ}{{\mathbb Z}}
\newcommand{\bbR}{{\mathbb R}}
\newcommand{\bbP}{{\mathbb P}}
\newcommand{\bbE}{{\mathbb E}}
\newcommand{\bbT}{{\mathbb T}}
\newcommand{\bbN}{{\mathbb N}}
\newcommand{\la}{\lambda}
\newcommand{\Om}{{\Omega}}
\newcommand{\tk}[1]{{\color{blue}#1}}
\newtheorem{thm}{Theorem}[section]
\newtheorem{lemma}[thm]{Lemma}
\newtheorem{proposition}[thm]{Proposition}
\newtheorem{hypothesis}[thm]{Hypothesis}
\title{Homogenization of   stable-like
    operators with random, ergodic coefficients}
\date{\today}
\author {Tomasz Klimsiak \textsuperscript{1,2} \and  Tomasz Komorowski
  \textsuperscript{1,{\tiny\Letter}} \and Lorenzo Marino \textsuperscript{3}}
\begin{document}
\numberbysection

\nocite{*}

\maketitle
\begin{center}
 \par \bigskip
\textsuperscript{\tiny 1} {\scriptsize Institute of Mathematics, Polish Academy of Sciences,\\
 \'{S}niadeckich 8,   00-656 Warsaw, Poland, email: {\tt
   tkomorowski@impan.pl}, corresponding author} \par \medskip

\textsuperscript{\tiny 2} {\scriptsize Faculty of
Mathematics and Computer Science, Nicolaus Copernicus University,\\ 
Chopina 12/18, 87-100 Toru\'n, Poland, email:{\tt  tomas@mat.umk.pl}} \par \medskip

\textsuperscript{\tiny 3} {\scriptsize Unit\'e de Math\'ematiques Appliqu\'ees (UMA), ENSTA Paris,\\
828 Boulevard des Mar\'echaux, F-91120 Palaiseau, France, email: {\tt lorenzo.marino@ensta-paris.fr}} \par
\end{center}

\vspace*{-.1in}

\begin{abstract}
We  show  homogenization for
a family of
$\bbR^d$-valued
stable-like processes $(X_t^{\eps;\theta})_{t\ge0}$,
$\eps\in(0,1]$,
whose
(random) Fourier symbols equal
$q_\eps(x,\xi;\theta)=\frac{1}{\eps^{\al}}q\Big(\frac{x}{\eps},\eps \xi; \theta\Big)$, where
$$  q(x,\xi; \theta)=\int_{\mathbb
    R^d}\left(1-e^{i  y\cdot\xi}+i y \cdot \xi\mathds{1}_{\{|y|\le1\}}\right)\,\frac{\langle a(x; \theta)y,y\rangle}{|y|^{d+2+\alpha}}\,dy,
  $$
for $(x,\xi, \theta)\in \bbR^{2d}\times\Theta$.  Here $\al\in(0,2)$ and the family  
$(a(x; \theta))_{x\in \bbR^d}$ of $d\times d$ symmetric, non-negative definite matrices  is a
stationary ergodic random field   over some probability space $(\Theta,{\cal
  H},m)$. 
  We assume that the random field is deterministically bounded and non-degenerate,
  i.e.\ $|a(x; \theta)|\le \Lambda$ and   $\text{Tr}(a(x; \theta))\ge \lambda$
 for some $\Lambda,\lambda>0$ and all $\theta\in \Theta$. In addition, we
 suppose that the field is
    regular enough so  that
for any $\theta\in\Theta$, the operator $-q(\cdot,D; \theta)$, defined on the
space of compactly supported $C^2$ functions on $\bbR^d$,
is closable in   the space of continuous
functions vanishing at infinity    and its closure generates a Feller
semigroup.
We prove the weak convergence of the laws of
$(X_t^{\eps; \theta})_{t\ge0}$, as $\eps\downarrow0$, in the
Skorokhod space,  $m$-a.s.\ in $\theta$, to an $\al$-stable process whose
Fourier symbol $\bar q(\xi)$ is given by 
$
\bar q(\xi)=\int_{\Om} q(0,\xi; \theta)\Phi_*(\theta)\,m(d \theta),$ 
where 
$\Phi_*$ is a strictly positive density w.r.t. measure $m$.  Our result has an   analytic
interpretation in terms of the convergence, as $\eps\downarrow0$, of the
solutions  to random integro-differential equations
$  \partial_tu_\eps(t,x; \theta)=-q_\eps(x,D; \theta) u_\eps(t,x; \theta)$, with
the initial condition $u_\eps(0,x; \theta)=f(x)$, where $f$ is a bounded
and continuous function on $\bbR^d$.

\end{abstract}

\noindent{\bf Key words:} Martingale problem, stable-like Feller process,
homogenization, stationary and ergodic coefficients. \\
{\bf 2020 Mathematics Subject Classification:} primary 35B27;
secondary 60H15, 60H20.\\

\section{Introduction}
\label{sec1}


In the present paper, we consider the problem of homogenization for a
class of  stable-like  operators  with random coefficients.
More precisely, let $(\Theta,{\cal H},m)$ be a   probability
space. For each $\theta\in\Theta$, we assign an integral operator
brown $L^\theta$ on $C_c^2(\bbR^d)$ - the set of all $C^2$-smooth,
compactly supported functions on $\bbR^d$ - given by
\begin{equation}\label{Lom}
L^\theta u(x) \, := \frac12\int_{\bbR^d}\left[u(x+z) +u(x-z)-2u(x)\right]\,{\rm n}(x,dz;\theta),
\end{equation}
where the L\'evy kernel ${\rm n}$ has  the form 
\begin{equation}
  \label{Lt}
  {\rm n}(x,d{\color{brown}z};\theta):= {\frac{\langle  a(x;\theta){
        z},{ z}\rangle}{| z|^{d+\alpha+2}}}\,d{ z},
  \end{equation}
for some $\alpha \in(0,2)$ and  $a(x;\theta)=[a_{i,j}(x;\theta)]_{i,j=1}^d$ is a  stationary and
ergodic random field taking values in $\mathcal S^+_d$, the family of all non-negative definite, symmetric  $d\times d$-matrices.

Throughout the paper, we assume the following:
\begin{hypothesis}
  \label{H1}
The realizations of the random field {$\left(a(x;\theta)\right)_{x\in \bbR^d}$} are continuous in $x$, deterministically bounded, both in $x$ and $ \theta$, and satisfy the  non-degeneracy  condition, i.e\ there exist $\lambda,\Lambda>0$ such that
\begin{equation}
\label{eq.MH1}
\|a(x;\theta)\|\le \Lambda,\qquad 
\operatorname{Tr}(a(x;\theta)) \ge \lambda, \qquad (x,\theta)\in \bbR^d\times \Theta.  
\end{equation} 
Here, $ \|a \|:= \sum_{i,j=1}^d|a_{i,j}|$.
\end{hypothesis}

\begin{hypothesis}
  \label{H2} For any   $\theta\in\Theta$, the martingale  
problem (see Section \ref{sec2.2} below) associated with  the operator
{$L^\theta$ defined in} \eqref{Lom}
is well-posed on the Skorokhod space ${\mathcal
  D}$ of  {all} $\bbR^d$-valued   c\`adl\`ag paths equipped with the
$J_1$-topology (see \cite[Section 12]{bil} for the definition of the topology).
\end{hypothesis}
Hypothesis \ref{H2} holds e.g. whenever $a(x;\theta)$ is sufficiently smooth in $x$ (see \cite[Theorem 4.2]{Hoh}) 
or it is Lipschitz continuous in $x$ and uniformly elliptic (see
\cite[Theorem 1.3]{BKS}), i.e. or some ${\cal r}>0$
\begin{equation}
\label{eq.uel} 
{ \langle a(x;\theta)\xi,\xi\rangle\ge{\cal{r}} |\xi|^2}, \qquad (x,\xi,\theta)\in\bbR^{2d}\times\Theta.
\end{equation}

{Assume Hypothesis \ref{H1}. The results \cite[Theorem 1.1, Lemma
  2.1]{kuhn}} and \cite[Theorem 4.10.3]{Kol} show that then Hypothesis \ref{H2} is
equivalent to the following condition:

\begin{itemize}
  \item[-]
for any $\theta\in \Theta$, the operator $L^\theta$ defined in
\eqref{Lom} is closable in $C_0(\bbR^d)$ - the set of all continuous
functions on $\bbR^d$ vanishing at infinity - and its closure
generates a Feller semigroup.
\end{itemize}
{In particular, it follows from the above that} for each $\theta\in\Theta$ there exists a
unique $\bbR^d$-valued, c\`adl\`ag,  Feller  process 
$(X^{x;\theta}_t; t\ge 0,x\in\bbR^d)$, defined over some probability space $(\Sigma,\mathcal{A},\bbP)$,
satisfying $X^{x;\theta}_0=x$, $\bbP$-a.s. Its random Fourier
symbol  {$q\colon \bbR^{d}\times\bbR^{d}\times\Theta\to \bbR$} equals (due to the symmetry of the measure ${\rm n}(x,\cdot,\theta)$)
\begin{equation}
  \label{beq01}
  q(x,\xi;\theta)=\int_{\mathbb
    R^d}(1 -e^{iz\cdot\xi}+i z \cdot \xi\mathds{1}_{\{|z|\le1\}})\,{\rm
    n}(x,dz;\theta)=\int_{\mathbb
    R^d}(1 -\cos(z\cdot\xi))\,{\rm n}(x,dz;\theta).
\end{equation}

 {To study the homogenization problem, we introduce the scaled processes
$X_t^{\eps,x;\theta}(\zeta):=\eps X^{x/\eps;\theta}_{t\eps^{-\al}}(\zeta)$,
$t\ge0$, defined  for any  $\eps>0$. The processes  are
 considered  over the product probability space $(\Theta\times\Sigma,{\cal
    H}\otimes{\cal A},m\otimes\bbP)$.
For each $\theta\in\Theta$, the process $(X_t^{\eps,x;\theta},t\ge 0,x\in\bbR^d)$
is Feller and its infinitesimal generator on $C^2_c(\bbR^d)$ equals
\[
L^\theta_\varepsilon u(x):= \frac12\int_{\bbR^d}\left[u(x+z)-2u(x)+u(x-z)\right] \frac{\langle  a(x/\varepsilon;\theta)z,z\rangle}{|z|^{d+\alpha+2}}\,dz,\qquad x\in \bbR^d.\]

Our main result is the following quenched version of the convergence
of the laws of the scaled processes.

\begin{thm}
\label{th.intr1}
Under the assumptions spelled out in the foregoing the laws of
$(X_t^{\eps,x;\theta})_{t\ge 0}$  converge  weakly  in ${\cal D}$, as
$\eps\downarrow0$,  for $m$-a.s.\ in $\theta$, to the law of an  {$\alpha$-stable} process whose Fourier symbol equals 
\[\bar q(\xi) =\int_{\Theta}q(0,\xi;\theta)\Phi_*(\theta)\,m(d
  \theta)= { \int_{\mathbb
    R^d}\left[1 -\cos(z\cdot\xi)\right]\,{\frac{\langle  \bar az,z\rangle}{|z|^{d+\alpha+2}}}\,dz}, \qquad \xi\in\bbR^d.
\] 
Here $\Phi_*$ is a strictly positive density with respect to $m$ and
\[\bar a:= \int_{\Om} a(0;\theta)\Phi_*(\theta)\,m(d\theta).\]
\end{thm}


Our result has an obvious analytic interpretation in terms of
solutions to random integro-differential equations of the form
\[
\begin{cases}
\partial_tu_\eps(t,x;\theta)=L^\theta_\varepsilon u_\eps(t,x;\theta),\qquad t>0, \, x\in\bbR^d, \, \theta\in\Theta;\\
u_\eps(0,x;\theta)=f(x),
  \end{cases}
\]
where $f$ is a bounded and continuous function in $\bbR^d$.
As its direct consequence one can conclude that   for
$m$-a.s. $\theta$ the solutions
 $u_\eps(t,x;\theta)$ converge,  
as $\eps\downarrow0$, to the
solution $\bar u(t,x)$ of the following ``homogenized'' equation:
\[
  \begin{cases}
   \partial_t\bar u(t,x)=\bar L \bar u(t,x), \qquad t>0, \, x\in\bbR^d;\\  
   \bar u(0,x)=f(x), 
  \end{cases}
 \]
  where
\[
\bar L  u(x):= \frac12\int_{\bbR^d}\left[u(x+z)-2u(x)+u(x-z)\right]
\frac{\langle\bar a {
    z},{ z}\rangle}{|{ z}|^{d+\alpha+2}}\,d{ z}.
\]

\subsection*{Context}
{Homogenization of  solutions to  stochastic differential equations
  (S.D.E-s) and partial differential equations (P.D.E.-s) with random
  coefficients is a classical problem in both analysis and 
  the theory of stochastic processes. The
  pioneering results  in the subject  have been  almost simultaneously
  obtained   in \cite{kozlov} and   
  \cite{PV1}, where  the  problem of homogenization of solutions to the Dirichlet boundary value problem for
   elliptic equations  in a divergence form, with stationary and
ergodic coefficients has been considered. 
Since then the topic has been developed by many authors and for
various types of elliptic and parabolic differential equations  with
fast oscillating coefficients.  We refer an interested reader to the monographs 
\cite{AKM,BensPap,jikov,KLO12,Olla,Pavl,tartar} and the references therein.}

More recently, there has been a growing interest in  stochastic homogenization for classes of integro-differential equations and a related problem of  scaling limits of S.D.E-s with random coefficients, driven by general L\'{e}vy processes. Often such limits require a non-diffusive scaling and the result of the homogenization  is a L\'{e}vy process.

{We mention in this context, papers 
\cite{arizawa,barles,franke,horie,song,Kas-pyat,pyat1} for
equations with fast oscillating periodic coefficients and 
\cite{CCKW1,CCKW2,CCKW3,fujiwara,pyat2,rhodes,rhodes2} which are concerned with stochastically homogeneous random coefficients.} 
In particular, the paper \cite{fujiwara} considers the limit of the
martingale problem whose coefficients are driven by some uniquely
ergodic Markov process. The work \cite{pyat2}   deals with the
diffusive limit of non-local operators of  convolution type with
random  ergodic coefficients. The closest  case to ours is considered
in \cite{rhodes,rhodes2}, where one dimensional SDEs, driven by both
Brownian and Poisson noises, are examined. The coefficients are
stationary and ergodic fields. The principal difference between the
present paper and \cite{rhodes,rhodes2} is that the latter look at the
situation when the process describing the environment, as viewed from
vantage point of the trajectory of the solution of the SDE, has an
explicitly given invariant measure. In contrast, the main effort of
our article is to construct the invariant measure for the
aforementioned process.
The present paper   is related to
the result of \cite{PV2}   where  diffusions with no jumps
have been considered, i.e.\ $b\equiv 0$ and ${\rm n}\equiv0$. In this
case, $q_\eps(x,\xi;\theta)=\frac12a(x/\eps;\theta)\xi\cdot\xi$ for any $\varepsilon>0$.


{Finally,  we mention that in the non-linear framework,
  homogenization of solutions of integro-differential equations with an external
  Dirichlet boundary condition on a bounded domain has been considered
  in \cite{schwab,schwab2}, for a class of fully non-linear, non-local
  elliptic operators with fast ocillating, either periodic, or
  stochastic and  ergodic,  coefficients, that includes also the
  operators of the form \eqref{Lom}. However, their method of proof is
  quite different from ours. We should also emphasize that our result
 deals with the convergence of the laws of stochastic processes, that
 constitutes the novelty of the present paper. In addition
  \cite{Chen:Tang1,Chen:Tang2} consider  evolution equations involving non-local $p$-Laplacian type operators both in periodic and random media.}

\subsection*{About the method of proof. Organization of the paper}

Concerning the organization of the paper, in Sections
\ref{sec2.1}--\ref{sec2.2}  we introduce the basic notions used
throughout the paper. In Section \ref{sec2.2a} we recall some  facts
about stable-like processes.
To show our main result, formulated in Theorem \ref{th.intr1}, we embed the space  $\Theta$ into $\Om$
- the set of all continuous and bounded matrix valued functions, by
assigning to each $\theta\in  \Theta$ its realization, i.e.\ $J(\theta)(x):= a(x;
\theta),$ $x\in\bbR^d$. The space $\Om$ is Polish, when considered
with the standard Fr\'echet metric. We prove,  {see Theorem
  \ref{thm.CLT}}, that the conclusion of our main result holds under
the  {additional} assumption that an $\Om$-valued {\em environment
  process}, which describes the random environment  $\om\in \Om$ from
the vantage point of the trajectory, has an invariant ergodic
probability measure $\mu_*$. This measure is equivalent with $\mu$ - the
push-forward measure of $m$ by $J$. The density
$\Phi_*=\frac{d\mu_*}{d\mu}\circ J$ appears in the statement of
Theorem \ref{th.intr1}. Such environment process is rigorously
introduced in Section \ref{sec2.4}.
In Section
\ref{Sec.CLT}, we formulate  and prove  the
homogenization result, provided that we know that the environment
process possesses an invariant and ergodic measure equivalent with
$\mu$, the law of the random environment. Section \ref{sec3} is
devoted to showing the existence and uniqueness of such a measure, with a
strictly positive  invariant density $\Phi_*$, see
Theorem \ref{thm:main}.  The proof uses  a weak
form  of  the
Alexandrov-Bakelman-Pucci (ABP) estimates for solutions of the
Dirichlet problem for equations involving a 
non-local operator of the form \eqref{Lom}. Such estimates follow from
the results of \cite{ABP}, see Section \ref{sec-abp}. Using this 
we can prove that the invariant density exists and  is in fact $L^{1+\delta}$
integrable for  $0<\delta<\frac{\al}{2d-\al}$.

The proof of Theorem \ref{th.intr1} is presented in Section \ref{sec4.5}
under an additional assumption that the uniform ellipticity condition
\eqref{eq.uel} holds. In Section
\ref{sec.main.gen} we relax this assumption and prove the result in full generality. 

Finally in the Appendix, we show some additional facts, which are
needed in our paper. As we have already mentioned in the foregoing, Section \ref{sec-abp} formulates a version of the
Alexandrov-Bakelman-Pucci estimates  {for integro-differential operators}, see Theorem \ref{ABP}, that can
be inferred from \cite[Theorem 1.3]{ABP}. In Section
\ref{appC} we prove  a result, see Proposition \ref{prop012812-23},  about irreducibility of stable-like processes.
 Section \ref{appC1} is devoted to derivation of the formula for the Fourier symbol of an isotropic stable-like process.
Section \ref{appD} contains the proof of the fact that the set of
$d\times d$, non-negative symmetric
 matrix valued functions, such that there exists a unique Feller
 process corresponding to the respective Fourier symbol \eqref{beq01},
 is a Borel subset of $\Om$.

\subsection*{Acknowledgments}  
T. Komorowski acknowledges the support of the NCN grant 2020/37/B/ST1/00426,
T. Klimsiak acknowledges the support of the NCN grant   2022/45/B/ST1/01095.

\section{General setup and notations}
\label{Sec:prob_setup_result}

\subsection{Some generalities}
\label{sec2.1}
We denote by $B(y,r)$ the open  ball of radius $r>0$ centered at $y\in \bbR^d$, with respect to the Euclidean metric on $\bbR^d$ and  by $\mathcal{L}_d$ the Lebesgue measure on $\bbR^d$. Let $B_b(\bbR^d)$ (resp.\ $C_b(\bbR^d)$) be the space of all Borel measurable (resp.\ continuous) and bounded functions on $\bbR^d$. We denote the supremum norm  of any bounded function $f$ on $\bbR^d$ by    $\|f\|_\infty:=\sup_{x\in\bbR^d}|f(x)|$. 
Given $k\in \bbN$, let $C_b^k(\bbR^d)$ be the space of all $k$-times
differentiable functions with continuous and bounded derivatives. For
any  $f\in C^k_b(\bbR^d)$, we denote by $\|f\|_{k,\infty}$ the norm
defined as the sum of the supremum norms of the function and all its
derivatives up to order $k$ included.  {For any $f\in C^1_b(\bbR^d)$,
  we denote its gradient  by $\nabla f$. We also consider the space $C^\infty_c(\bbR^d)$  of all the compactly supported, smooth functions on $\bbR^d$.}  For a given $\la>0$, we denote by ${\cal  S}_d^+(\la)$ the set of all matrices $a\in{\cal S}_d^+$ such that
$a-\la {\rm I_d}\in {\cal S}_d^+$, where ${\rm I_d}$ is the $d\times d$
identity matrix.

\subsection{Probability  space with a group of measure preserving  transformations}
\label{sec2.2}

 {Let  $\Omega$ be the space of all functions $\om:\bbR^d\to \mathcal{S}_d^+$, which are   continuous and satisfy}
\[\|\om(x)\|\le \Lambda,\qquad 
\operatorname{Tr}\left(\om(x)\right) \ge \lambda,\qquad  x\in\bbR^d,\]
{where $\Lambda,\lambda>0$ are the same constants appearing in \eqref{eq.MH1}.}
The space   is Polish when equipped with the Fr\'echet metric 
\[{\rm d}  (\om_1, \om_2) \, := 
\,
\sum_{K=1}^{+\infty}\frac{1}{2^K}\cdot\frac{\|\om_1-\om_2\|_{\infty,K}}{1
  +\|\om_1-\om_2\|_{\infty,K}},\]
where, for a given $K>0$,  
\[
\|\om\|_{\infty,K}:=
 \sup_{|x|\le K }
 |\om(x)|, \qquad \om \in \Omega.
\]
For ${\cal r}>0$,  {we introduce the subspace $\Omega_{\cal r}$ of $\Omega$ which consists of all functions $\omega\colon \bbR^d\to{\cal S}_d^+({\cal r})$ such  that both $\omega$ and $\nabla \omega$ are continuous.} The space $\Omega_{\cal r}$ is Polish when equipped  with the Fr\'echet metric
\[{\rm d}_{\cal r} (\om_1, \om_2) \, := 
\,
\sum_{K=1}^{+\infty}\frac{1}{2^K}\cdot\frac{\|\om_1-\om_2\|_{1,\infty,K}}{1
  +\|\om_1-\om_2\|_{1,\infty,K}}.\]
Here
\[
\|\om\|_{1,\infty,K}:=
 \sup_{|x|\le K }
 |\om(x)|+ \sup_{|x|\le K }
 |\nabla \om(x)|,\qquad \om\in \Omega_{\cal r}.
\]
We denote by $\mathcal B(\Omega_{\cal r})$ and  $\mathcal B(\Omega)$ the Borel  $\sigma$-fields
of $(\Omega_{\cal r},{\rm d}_{\cal r})$ and $(\Omega,{\rm d})$, respectively.
{Moreover, let $B_b(\Omega)$ and $B_b(\Omega_{\cal r})$ (resp.\ $C_b(\Omega)$ and $C_b(\Omega_{\cal r})$) be the
spaces of all Borel measurable (resp.\ continuous) and bounded functions on
 $(\Omega,{\rm d})$ and $(\Omega_{\cal r},{\rm d}_{\cal r})$, respectively.} 
Note that by  \cite[Theorem 8.3.7]{cohn},
$\Omega_{{\cal r}}$ is a Borel measurable subset of $\Omega$, hence $\mathcal B(\Omega_{\cal r})\subset \mathcal B(\Omega)$.

{Consider an additive group of
transformations $(\tau_x)_{x\in\bbR^d}$ acting on $\Omega$ as follows} 
\[\tau_x\om(y):=\om(x+y),  \qquad y\in\bbR^d.\]
Clearly, $\tau_x(\Omega_{\cal r})\subset \Omega_{\cal r}$. Note that for any $f\in C_b(\Omega)$, we have
\[\lim_{x\to0}f\big(\tau_x\omega)=f(\omega),\qquad\om\in \Omega.\]

Given two measure spaces $(\Sigma_i,{\cal
  A}_i,m_i)$, $i=1,2$ and a measurable mapping $S:\Sigma_1 \to\Sigma_2$, 
  we denote by $S_{\sharp} m_1$ the push-forward of $m_1$
through $S$, i.e.\ the measure on $(\Sigma_2,{\cal
  A}_2)$ given by
$S_{\sharp} m_1(A)=m_1\big(S^{-1}(A)\big)$ for any $ A$ in $ \mathcal A_2$. We introduce the mapping $J:\Theta\to \Omega$ by letting
\[
J(\theta)(x):= a(x;\theta),\qquad x\in\bbR^d.\]
{By the assumptions made in Hypothesis \ref{H1}, the function}
$J\colon(\Theta,\mathcal H)\to (\Omega,\mathcal B(\Omega))$ is
measurable. Let  $\mu:= J_{\sharp}m$ be a measure  on $(\Omega,\mathcal B(\Omega))$.
By stationarity and ergodicity  of $(a(x))_{x\in\bbR^d}$ the measure
$\mu$ is invariantand ergodic under the action of the group, i.e.
\begin{equation}
\label{eq.inv}
(\tau_x)_{\sharp}\mu=\mu,\qquad x\in\bbR^d
\end{equation}
and    if  $A\in \mathcal B(\Omega)$ satisfies $\tau_xA=A$ for all $x\in \bbR^d$, then $\mu(A)=0$ or $1$.

\subsection{Random stable-like processes}
\label{sec2.2a}
{Recall that   ${\mathcal D}$ is} the
space of all  c\`adl\`ag paths $\zeta\colon [0,+\infty)\to \bbR^d$, equipped with
the $J_1$-Skorokhod topology. {We now introduce the canonical process $(X_t)_{t\ge 0}$ by letting}
\begin{equation}
\label{eq:can}
X_t(\zeta):=\zeta(t), \qquad \zeta \in {\mathcal D},
\end{equation}
and its natural filtration $({\mathcal F}_t)_{t\ge0 }$ by ${\mathcal F}_t:=\sigma\left( X_s,\,0\le s\le t\right)$. Then, ${\cal F}:=\sigma\left( X_t,\,t\ge 0\right)$ is the Borel $\si$-algebra on ${\cal D}$.

Given the function ${\mathbf a}: \Omega\to \mathcal{S}_d^+$ defined by
${\mathbf a}(\om):=\om(0)$, consider  the associated (random) Fourier
symbol  
\begin{equation}
  \label{beq01.reg}
  \mathbf q(\xi;\omega) := \int_{\mathbb
    R^d}(1 -e^{iz\cdot\xi}+i z \cdot
  \xi\mathds{1}_{\{|z|\le1\}})\frac{\langle  {\mathbf
      a}(\omega)z,z\rangle}{|z|^{d+\alpha+2}}\,dz=\int_{\mathbb
    R^d}(1 -\cos(z\cdot\xi))\frac{\langle  {\mathbf a}(\omega)z,z\rangle}{|z|^{d+\alpha+2}}dz,
\end{equation}
where $(\xi,\omega)\in\bbR^d\times\Om$,
and the corresponding operator $\mathbf q(D;\omega)$ on $C_c^2(\bbR^d)$ by
\begin{equation}
  \label{qxd}
\mathbf q(D;\omega)u(x):=\int_{\bbR^d}\mathbf q(\xi;\tau_x\omega)\hat u(\xi) e^{ix\cdot\xi}\,d\xi, \qquad x\in\bbR^d.
\end{equation}
Furthermore, one can conclude, see Appendix \ref{appC1}, that
\begin{equation}
\label{eq.bb1}
\mathbf q(\xi;\omega)= C_{d,\al}{\rm Tr}\Big({\mathbf
      a}(\omega)\Big)|\xi|^\al+c_{d,\al} \langle  {\mathbf
      a}(\omega)\hat\xi,\hat\xi\rangle|\xi|^\al,
\end{equation}
where $\hat \xi:=\xi/|\xi|$ and the constants $C_{d,\al}, c_{d,\al}>0$
depend only  the dimension $d$ and  exponent $\al$. 

We say that the martingale problem
corresponding to  $\mathbf q(D;\omega)$, $\om\in\Om$, is well-posed, cf.\ \cite{stroock}, if for
every Borel probability measure
$\nu$ on $\bbR^d$, there exists a unique
Borel  probability measure $\bbP^{\nu;\om}$ on $ {\cal D} $, called a {\em solution to the martingale problem}  
for $\mathbf q(D;\omega)$ with initial distribution $\nu$, such that
\begin{itemize}
\item[i)] $\bbP^{\nu;\om}\left(X_0\in A\right)=\nu(A)$ for any Borel measurable
 $A\subset \bbR^d$,
\item[ii)] for any $f\in C_c^\infty(\bbR^d)$,
 the process
\begin{equation}
\label{Mtf}
M^\om_t[f]:= f(X_t)-f(X_0)-\int_0^t [\mathbf q(D;\omega) f](X_r)\,dr,\qquad t\ge 0
\end{equation}
is a  (c\`{a}dl\`ag paths) $({\cal F}_t)_{t\ge0}$-martingale under measure $\bbP^{\nu;\om}$.
\end{itemize}
As usual, we write $\bbP^{x;\om}:=\bbP^{\delta_x;\om}$,  $x\in\bbR^d$. The expectations   
with respect to $\bbP^{\nu;\om}$ and $\bbP^{x;\om}$ shall be denoted 
by $\bbE^{\nu;\om}$ and $\bbE^{x;\om}$, respectively.

{Let $\Omega^{\cal M}$ be the set of all $\omega\in \Omega$ such that there exists a unique Feller process corresponding to the Fourier symbol $\mathbf q(\xi;\omega)$.  One can show, see Appendix \ref{appD}, that $\Omega^{\cal M}\in \mathcal{B}(\Omega)$.}
Moreover, by the assumptions made in Hypothesis \ref{H1}, the measure $\mu:= J_{\sharp}m$ is supported in $\Omega^{\mathcal M}$. 
By \cite[Theorem 1.3]{BKS}, it also follows that  $\Omega_{\cal r}\subset  \Omega^{\mathcal M}$
for any $\cal r>0$.

{Theorem $1.1$ and Lemma $2.1$ in \cite{kuhn} imply that}
for each $\omega\in \Om^{\mathcal M}$, the operator
$\mathbf q(D;\omega)$ defined on $C_c^2(\bbR^d)$ is closable in $C_0(\bbR^d)$ and its
closure generates the Feller transition probability semigroup
$(P^\omega_t)_{t\ge 0}$, satisfying
\begin{equation}
  \label{032310-23}
P^\omega_tf(x)=\bbE^{x;\om} f\big(X_t\big),\qquad f\in
C_0(\bbR^d),\,x\in\bbR^d,\,t\ge0.
\end{equation}
For any $\beta>0$ and $\om\in \Omega^{\mathcal M}$, we introduce the $\beta$-resolvent operator
  $R_\beta^\omega\colon { B_b(\bbR^d)}\to { B_b(\bbR^d)}$ as
  \begin{equation}
    \label{012812-23}
R_\beta^\omega f(x):=\int_0^{\infty}e^{-\beta t}P^\omega_tf(x)\, { dt},\qquad
x\in  \bbR^d.
\end{equation}
Since the Fourier symbol of $(\bbP^{x;\omega})_{x\in\bbR^d}$ is given by 
$$
q(x,\xi;\omega):= \mathbf q(\xi,\tau_x\omega),  \quad (x,\xi,\om)\in \mathbb
R^{2d}\times  \Omega^{\mathcal M},
$$
with $\mathbf q$  {defined in} \eqref{beq01.reg},
one can show that $\tau_x(\Omega^{\mathcal M})\subset \Omega^{\mathcal M}$ and 
\begin{equation}
\label{022905-20}
\mathbb{P}^{x+y;\omega}=(s_y)_{\sharp}\mathbb{P}^{x;\tau_y\omega},\quad x,y\in\bbR^d,\om \in \Omega^{\mathcal M},
\end{equation}
where   $s_y:{\cal D}\to{\cal D}$ is given by
$s_y(\zeta)(t):=y+\zeta(t)$, $t\ge0$.




For each $\eps\in(0,1)$ and $\om\in \Omega^{\mathcal M}$, 
we now consider the scaled process
$\left(\bbP_{\eps}^{x;\om}\right)_{x\in\bbR^d}$ such that $\bbP_{\eps}^{x;\om}(X_0=x)=1$ and
whose Fourier symbol $q_\eps\colon \bbR^{d}\times \bbR^{d}\to \bbR^d$ equals
\begin{equation}
  \label{beq01eps}
    q_\eps(x,\xi;\omega):=\frac{1}{\varepsilon^{\alpha}}q\Big(\frac{x}{\varepsilon},\varepsilon\xi;\omega\Big)=C_{d,\al}{\rm Tr}\Big({\mathbf
      a}(\tau_{x/\eps}\omega)\Big)|\xi|^\al+c_{d,\al} \langle  {\mathbf
      a}(\tau_{x/\eps}\omega)\hat\xi,\hat\xi\rangle|\xi|^\al.
\end{equation}
Denote by ${\cal T}_\eps:{\cal D}\to{\cal D}$ the mapping ${\cal T}_\eps(\zeta)(t):=\eps
 \zeta(t/\eps^{\al})$. Note that
\begin{equation}
  \label{011903-24}
  \bbP_{\eps}^{x;\om} =\big({\cal
 T}_\eps\big)_{\sharp}\bbP^{x/\eps;\om}.
\end{equation}
Let $\bbE_{\eps}^{x;\om} $ be the expectation with respect to the measure $\bbP_{\eps}^{x;\om}$. We denote by $(P^{\omega}_{t,\varepsilon})_{t\ge0}$  the Feller semigroup associated with the process $(\bbP^{x;\omega}_\varepsilon)_{x\in\bbR^d}$ and by $\mathbf q_\varepsilon(D;\omega)$  the corresponding generator.

Given a Borel probability measure $\nu$ on $\Omega$ that is supported
in $\Omega^{\mathcal M}$,  {we introduce the measure $\bbP^{x;\nu}_\eps$ on  $(\Omega\times \mathcal{D},\mathcal{B}(\Omega)\otimes {\cal F})$}
\begin{equation}
  \label{Qeps1234}
\bbP^{x;\nu}_\eps (A)=\int_{\Omega}\nu(d\om)\int_{\cal D}1_A(\om,\zeta) \bbP^{x;\om}_\eps
(d\zeta),\qquad A\in { \mathcal{B}(\Omega)}\otimes {\cal F}
\end{equation}
and  let $\bbE^{x;\nu}_\eps$ be the respective expectation.
To lighten somewhat the notation, we omit writing the superscipt when
$x=0$ and subscript when $\eps=1$  in the notation of measures and
their respective expectations, e.g. we write $\bbE^{\nu} $ and $\bbP^{\nu}$ when $x=0$ and $\eps=1$.



\subsection{The random environment as seen from the particle}
\label{sec2.4}

For each $\omega\in \Omega^{\mathcal M}$, we introduce an $\Omega^{\mathcal M}$-valued process $(\eta_t)_{t\ge0}$
over the probability space $({\cal D},{\cal F}, \mathbb
P^{\omega})$, sometimes referred to as the environment process,
defined by
\begin{equation}
\label{eta-t}
\eta_t(\omega) \, := \, \tau_{X_t}\omega,\qquad t\ge0.
\end{equation}

\begin{proposition}
 For each $\omega\in \Omega^{\mathcal M}$, the process $(\eta_t(\omega))_{t\ge 0}$ is
$\left(\mathcal{F}_t\right)_{t\ge0}$-Markovian under measure $\bbP^{\omega}$. Its transition semigroup $({\frak P}_t)_{t\ge 0}$ is
given by
\begin{equation}
  \label{Pt}
{\frak P}_tF(\omega)\, = \bbE^{\om} F\big(\eta_t(\omega)\big)=P_t^\om\tilde F(\cdot\,;\om)(0), \qquad F\in B_b(\Omega^{\mathcal M}),\,\omega\in \Omega^{\mathcal M},
\end{equation}
where  $\tilde F(y;\om):=F(\tau_y\om)$.
\end{proposition}
The proof of the above result can be obtained following the same arguments as in \cite[Proposition 9.7]{KLO12}.
We may extend ${\frak P}_t$ to  {$B_b(\Omega)$} by letting 
\[
{\frak P}_t F(\omega):= F(\omega),\quad \omega\in \Omega\setminus \Omega^{\mathcal M}.
\]
Observe that if $\omega\in \Omega_{\cal r}$ (resp.\ $\omega\in \Omega^{\mathcal M}$), then $\eta_t(\omega)\in \Omega_{\cal r}$
(resp.\ $\eta_t(\omega)\in \Omega^{\mathcal M}$) for any $t>0$.
Thus, $\eta_t$ may be regarded as a Markov process on  either $\Omega_{\cal r}$, or $\Omega^{\mathcal M}$. 
Using Markov processes theory nomenclature both $\Omega_{\cal r}$ and $\Omega^{\cal M}$ are {\em absorbing sets}
(see \cite[Definition 12.27]{Sharpe} and also \cite[Theorem
12.30]{Sharpe}).

\subsection{The  homogenization result}
\label{Sec.CLT}

\begin{thm}[Quenched  invariance principle]
\label{thm.CLT}
Let $x\in \bbR^d$.
Assume that there exists an invariant ergodic probability measure $\mu_*$
for the process $(\eta_t)_{t\ge0}$, which is equivalent to $\mu$.
Then, as
$\eps\downarrow0$, the measures 
$\left(\bbP^{x;\om}_\eps \right)_{\eps\in(0,1)}$ weakly converge in $\mathcal{D}$, $\mu$-a.s.\ in $\om$, to $\bar \bbQ^x$, the law of an $\alpha$-stable process $\left\{x+\bar
  Z(t)\right\}_{t\ge0}$ with the L\'evy symbol
\[\bar q(\xi)=\int_\Omega  {\mathbf q} (\xi;\omega)\,\mu_*(d\omega) =C_{d,\al}{\rm Tr}(\bar {\mathbf a} )|\xi|^\al+c_{d,\al} \langle  \bar{\mathbf a} \hat\xi,\hat\xi\rangle|\xi|^\al,\]
where $\bar{\mathbf a}=\int_\Omega  {\mathbf a} (\omega)\,\mu_*(d\omega)$.
The above means that  for $\mu$-a.s.  $\om$ 
\[
  \lim_{\eps\downarrow0} \bbE_{\eps}^{x;\om} f=\bar \bbE^x
    f
  \]
  holds for any   $f$ - a bounded and
  continuous function on ${\cal D}$. Here, $\bar \bbE^x$ is the expectation with respect to $\bar \bbQ^x$.
In particular, $\mu$-a.s. $\om$, we have 
\[
\lim_{\eps\downarrow0} P^\omega_{t,\eps}f(x)=\bar P_tf(x) ,\qquad f\in
C_b(\bbR^d),\,t>0,\,  x\in\bbR^d.
\]
Here, $\left(\bar{ P}_t\right)_{t\ge0}$ is the transition probability semigroup of $\left(\bar Z(t)\right)_{t\ge0}$.

\end{thm}

\begin{proof}
The family $(\mathbb
P^{x;\om}_\varepsilon)_{\varepsilon\in (0,1]}$ is tight for any $\om\in \Omega$.
Indeed, there exists  a constant $C>0$ such that for any $u\in C^2_b(\bbR^d)$, $\varepsilon\in (0,1)$ and $\om\in \Omega$,
\begin{equation}
\label{012201-24}
\|q(\cdot,D;\omega)u(\cdot)\|_\infty\le C\|u\|_{2,\infty} .
\end{equation}
Hence, tightness of $(\bbP^{x;\om}_\varepsilon)_{\varepsilon\in
  (0,1]}$ follows, see the proof of  \cite[Theorem 4.9.2]{Kol}, or
\cite[Theorem A.1]{stroock}.

To finish the proof of the theorem, it suffices therefore to prove the
convergence of finite dimensional distributions. In what follows we
show that for any $t>0$, $f\in C_b(\bbR^d)$,  
\begin{equation}
  \label{031803-24}
  \lim_{\eps\downarrow0}  \bbE_{\eps}^{x;\om} f(X_t)=\bar \bbE^x f(X_t), \qquad \text{$\mu$-a.s.\ in $\om$}.
\end{equation}
The generalization of \eqref{031803-24} to the case of a finite number of times is straightforward. Since $\mu$ is invariant under $(\tau_x)_{x\in\bbR^d}$, it is enough to consider only the case when $x=0$, thanks to \eqref{022905-20}.

According to \cite[Theorem 1.1]{stroock}, for each $\om\in \Omega^{\mathcal M}$, $\xi\in\bbR$ and
$\eps>0$, the process
$$
\exp\left\{iX_t\cdot\xi-i x\cdot\xi-\int_0^tq_\eps(X_s,\xi;\om)ds\right\},\quad t\ge0
$$
is a mean one, $({\cal F}_t)$-martingale under the measure $\mathbb
P^{0;\om}_\varepsilon$. Taking into account the definition of the
scaled path measures,  cf.\ \eqref{011903-24}, in order to prove \eqref{031803-24}
  it suffices to only
show that for any $t>0$, $\xi\in\bbR^d$,
\begin{equation}
  \label{073012-23}
\lim_{\eps \downarrow 0} \int_0^{t/\eps^{\al}}
  { q\left(X_s,\varepsilon\xi\right)}\,ds= t \bar
    q (\xi ),\quad  \mathbb
P^{\mu}\mbox{-a.s.} 
  \end{equation}
Using the environment process $(\eta_t)_{t\ge 0}$ defined in
\eqref{eta-t} and formula \eqref{eq.bb1}, we see that
\eqref{073012-23}
is equivalent with proving that
\[\lim_{\eps \downarrow 0} \eps^{\al}\int_0^{t/ \varepsilon^{\alpha}} \mathbf
q( \xi;\eta_s)\,ds= t \bar
    q (\xi ),\quad  \mathbb
P^{\mu}\mbox{-a.s.} \]
By the individual Birkhoff ergodic theorem the above convergence holds $\mathbb
P^{\mu_*}$-a.s. The conclusion of the theorem is then a consequence of
the fact that  $\mu_*$ and $\mu$ are equivalent.
\end{proof}

{In light of the above result,} the proof of the main Theorem \ref{th.intr1} is concluded once we show the existence of a probability measure $\mu_*$ as in the statement of Theorem \ref{thm.CLT}. This is going to be the main objective of  Section \ref{sec3}.

\section{On the existence of an ergodic invariant measure for the environment process}
\label{sec3}

The main purpose of the present section is to prove the following result.
\begin{thm}
  \label{thm:main}
Let $1\le p<d/(d-\al/2)$. 
Then, there exists $\Phi_*\in L^{p}(\mu)$ such that:
\begin{enumerate}
\item $\|\Phi_*\|_{L^1(\mu)}=1$;
\item the measure $\mu_*$ on $(\Omega,\mathcal{B}(\Omega))$ given by
  $d\mu_\ast=\Phi_* d\mu$ 
is invariant  under the dynamics of the environment process $(\eta_t)_{t\ge 0}$, i.e.
\[
\int_{\Omega}{\frak P}_tF \, d\mu_\ast = \int_{\Omega}F \, d\mu_\ast
  ,\qquad F\in B_b(\Omega),\, t\ge0;
  \]
\item  
 $\Phi_*>0$ $\mu$-a.s.\ on $\Omega$ and  in consequence
 the measure $\mu_*$ is ergodic, i.e.   if $F\in L^\infty(\mu_\ast)$ satisfies ${\frak P}_tF=F$ 
 for any $t>0$, then $F$ is constant $\mu_\ast$-a.s.\ in $\Omega$.
\end{enumerate}
\end{thm}

The proof of the theorem is presented in  Sections
\ref{sec4} -- \ref{sec.main.gen} below.
 In  Sections
\ref{sec4} -- \ref{sec4.5} we prove Theorem \ref{thm:main} under some
additional assumptions about the non-degeneracy and  regularity  of the random field. Namely, we suppose that the
uniform ellipticity condition  \eqref{eq.uel}
holds  and the realizations of the random field $\left(a(x;\omega)\right)_{x\in\bbR^d}$
are  deterministically bounded together with their  first derivatives.  Finally, in Section \ref{sec.main.gen} we finish the proof of the theorem by discarding these additional regularity assumptions.

\subsection{An ergodic theorem}
\label{sec4}
Our first result is a version of the ergodic theorem somewhat
analogous to the one that can be found in    \cite{part}, see also
\cite[Lemma 3.2]{PV2}. Before its
formulation we need some notation. Let ${\rm e}_i$, $ i=1,\ldots,d$ be the canonical basis of $\bbR^d$. By the one dimensional
unit torus $\bbT$ we understand the interval $[-1/2,1/2]$ whose
endpoints  are identified. Given $M>0$, we can then denote by
$\bbT^d_{M}:=(M\bbT)^d$ the $d$-dimensional torus of length $M$. 
Furthermore, we let  
$Q_M \, := \, \left[- M/2,  M/2\right]^d$.  We shall
also denote by
$\ell_M(dy):=M^{-d}dy$ the normalized Lebesgue measure both on
$\bbT^d_{M}$ or $Q_M$.  {Let $B_b(\bbT^d_{M})$ (resp.\ $C(\bbT^d_{M})$) be the space of all bounded Borel measurable (resp.\ continuous) functions on $\bbT^d_{M}$.
Given $k\in \bbN$, let $C^k(\bbT^d_{M})$ be the space of all $k$-times differentiable functions on $\bbT^d_{M}$ with continuous derivatives.}

\begin{lemma}
\label{lm023005-20}
There exists $\bar\omega\in \Omega_{\cal r}$  such that
the sequence $(\bar\mu_M)_{M\in \bbN}$ of measures on $(\Omega_{\cal r},{\cal B}(\Omega_{\cal r}))$, given by
\[
\bar\mu_M(A)\, :=\, \int_{Q_M}1_A(\tau_y\bar\omega)\, \ell_M(dy),
\qquad  A\in {\cal B}(\Omega_{\cal r}),\]
weakly converges, as $M\to\infty$, to $\mu$.
\end{lemma}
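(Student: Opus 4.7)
The plan is to combine the multidimensional Birkhoff (Wiener) ergodic theorem for the $\bbR^d$-action $(\tau_x)$ on $(\Om,\mathcal G,\mu)$ with the fact that on the Polish space $\Om$ weak convergence of probability measures can be tested on a countable family. More precisely, I would fix once and for all a countable family $\{f_k\}_{k\ge 1}\subseteq C_b(\Om)$ of bounded Lipschitz functions (with respect to the metric ${\rm d}$) which is convergence-determining; such a family exists because $(\Om,{\rm d})$ is Polish. For each $k$, Wiener's ergodic theorem applied to $f_k\in L^1(\mu)$ and the ergodic action $(\tau_x)_{x\in\bbR^d}$ provides a set $\Om_k\in\mathcal G$ of full $\mu$-measure on which
\[
\lim_{M\to+\infty}\frac{1}{M^d}\int_{Q_M}f_k(\tau_y\omega)\,dy=\int_{\Om}f_k\,d\mu.
\]
Setting $\Om_0 := \bigcap_{k\ge1}\Om_k$ gives $\mu(\Om_0)=1$, so $\Om_0\ne\emptyset$, and any choice $\bar\omega\in\Om_0$ will satisfy the required convergence on the full convergence-determining family.

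The next step is to promote simultaneous numerical convergence of the integrals $\int f_k\,d\bar\mu_M\to\int f_k\,d\mu$ for every $k$ into weak convergence $\bar\mu_M\Rightarrow\mu$. Observing that $\int_\Om f\,d\bar\mu_M=M^{-d}\int_{Q_M}f(\tau_y\bar\omega)\,dy$ for every $f\in B_b(\Om)$, one picks any weak limit point $\nu$ of $(\bar\mu_M)$ along a subsequence (tightness is the point discussed below) and uses that $\int f_k\,d\nu=\int f_k\,d\mu$ for every $k$ to conclude $\nu=\mu$ by the convergence-determining property; since every subsequence has a further subsequence converging to $\mu$, the whole sequence converges weakly to $\mu$.

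The one genuine obstacle is tightness of $(\bar\mu_M)_{M\in\bbN}$ on the Polish space $\Om$. I would handle this by noting that $\mu$ itself is tight (every Borel probability on a Polish space is), fix $\delta>0$ and pick a compact $K_\delta\subseteq\Om$ with $\mu(K_\delta^c)<\delta$. The indicator $\mathbf 1_{K_\delta^c}$ is not continuous, but by inner/outer regularity we can sandwich it between two bounded Lipschitz functions $g_\delta\le\mathbf1_{K_\delta^c}\le h_\delta$ with $\int(h_\delta-g_\delta)\,d\mu<\delta$; both $g_\delta,h_\delta$ can be added to the countable family $\{f_k\}$, so $\limsup_M\bar\mu_M(K_\delta^c)\le\int h_\delta\,d\mu<2\delta$, yielding tightness.

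With tightness and convergence on $\{f_k\}$ in hand, the standard Prokhorov plus uniqueness-of-limit argument outlined above closes the proof. The rest of the argument (measure preservation under $(\tau_x)$ implying that $\mu$ is the correct limit candidate, and the fact that the torus $\mathbb T^d_M$ is really just $Q_M$ from the point of view of integration) is bookkeeping.
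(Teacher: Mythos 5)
Your overall strategy is the same as the paper's: apply the (Wiener/Birkhoff) ergodic theorem to a countable family of bounded continuous test functions, intersect the full-measure sets to get $\bar\omega$, and then upgrade convergence of the integrals to weak convergence. The paper implements the upgrade by remetrizing $\Om$ with an equivalent totally bounded metric $\bar{\rm D}$ (Hilbert-cube embedding), so that $U_{\bar{\rm D}}(\Om)$ is separable; a countable sup-norm-dense subfamily then suffices, and convergence against all of $U_{\bar{\rm D}}(\Om)$ implies weak convergence by \cite[Theorem 1.1.1]{stroock-varadhan} --- with no tightness argument anywhere. If your family $\{f_k\}$ is genuinely \emph{convergence-determining} (which is what the remetrization delivers), your proof is finished after the intersection step, and the entire third paragraph is superfluous.

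The tightness argument itself, however, contains a genuine error. You cannot in general sandwich $\mathds{1}_{K_\delta^c}$ from above by a Lipschitz (or even continuous) $h_\delta$ with $\int h_\delta\,d\mu<2\delta$: any continuous $h_\delta\ge \mathds{1}_{K_\delta^c}$ satisfies $h_\delta\ge 1$ on $\overline{K_\delta^c}$, so $\int h_\delta\,d\mu\ge \mu\big(\overline{K_\delta^c}\big)$, and the boundary $\partial K_\delta$ may carry positive $\mu$-mass (take $\Om=[0,1]$, $\mu$ Lebesgue and $K_\delta$ a fat Cantor set: then $\overline{K_\delta^c}=[0,1]$ and every continuous majorant has integral $\ge 1$). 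The natural repair --- majorizing $\mathds{1}_F$ for closed $F$ by $\max\big(0,1-k\,{\rm dist}(\cdot,F)\big)$ and invoking portmanteau --- runs into the problem that the ergodic theorem only handles countably many test functions at a time, while closed sets form an uncountable family; this is exactly the difficulty the paper's totally-bounded remetrization is designed to circumvent. So either drop tightness entirely by using a true convergence-determining countable class (the paper's route), or the proof as written does not close.
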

\begin{proof}
First, we observe that there
exist
a metric $\bar{\rm d}$ on
$\Omega_{\cal r}$ which is equivalent with  ${\rm d}_{\cal r}$ and
a countable family $\mathcal Z:=(F_n)_{n\in \bbN}$ of bounded
functions on $\Omega_{\cal r}$ such that $\mathcal{Z}$ is dense in $U_{\bar{\rm
    d}}(\Omega_{\cal r})$ in the supremum norm.
Here $U_{\bar {\rm d}}(\Omega_{\cal r})$ is the space of all  real valued, uniformly
continuous in metric $\bar {\rm d} $ functions on $\Omega_{\cal r}$.
This can be seen as follows. Since $\Omega_{\cal r}$ is Polish, it is
well-known that $\Omega_{\cal r}$
is homeomorphic to a subset of a compact metric space - the Hilbert
cube. Therefore, there exists an equivalent metric $\bar{\rm d}$ on
$\Omega_{\cal r}$ such that $\left(\Omega_{\cal r}, \bar{\rm d}\right)$ is totally
bounded. It then follows that the completion of $\Omega_{\cal r}$ under
$\bar{\rm d}$, denoted by $\bar \Omega_{\cal r}$, is compact. The space
$U_{\bar{\rm d}}(\Omega_{\cal r})$ is isometrically isomorphic with the space $C(\bar \Omega_{\cal r})$
of continuous functions on $\bar \Omega_{\cal r}$, both equipped with the topology
of uniform convergence. Since it is known (cf. \cite[Lemma
VI.$8.4$]{dunford-schwarz}) that the latter is separable, so is
$U_{\bar{\rm d}}(\Omega_{\cal r})$.

By the individual ergodic theorem,
we can choose $\tilde\Omega_{\cal r}\subset \Omega_{\cal r}$ such that $\mu(\tilde\Omega_{\cal r})=1$ and for any
$\bar\omega\in \tilde\Omega_{\cal r}$ and   $F\in{\mathcal Z}$
\begin{equation}
  \label{011307-23}
\lim_{M\to\infty}\int_{Q_M}F(\tau_y\bar\omega)\, \ell_M(dy)= 
\lim_{M\to\infty}\frac{1}{M^d}\int_{Q_M}F(\tau_y\bar\omega)\, dy \,= \, \int_{\Omega_{\cal r}}F\,d\mu.
\end{equation}
A density argument implies that \eqref{011307-23} holds for any function $F$ in $U_{\bar{\rm d}}(\Omega_{\cal r})$
and $\bar \omega\in  \tilde\Omega_{\cal r}$.
 The conclusion of the lemma follows
then from  \cite[Theorem $1.1.1$]{stroock-varadhan}.
\end{proof}

We can now state our version of the ergodic theorem.

\begin{thm}
  \label{thm:parta}
There exist a sequence $(\omega_n)_{n\in \bbN}$ in $\Omega_{\cal r}$ and an increasing 
sequence of positive numbers  $(M_n)_{n\in \bbN} $ such that $M_n\to \infty$ and
\begin{enumerate}
\item each $\omega_n$ is $M_n$-periodic in each variable, i.e.\
  $\tau_{M_n{\rm e}_i}\omega_n=\omega_n$, for $n\in \bbN$ and   $i=1,\ldots,d$;
\item the following sequence of probability measures on
  $(\Omega_{\cal r},{\cal B}(\Omega_{\cal r}))$
\begin{equation}
\label{062905-20}
\mu_n(A) \, := \, \int_{\bbT^d_{M_n}}\mathds{1}_A(\tau_y\omega_n)\, \ell_n(dy), \qquad
A\in {\cal B}(\Omega_{\cal r}),
\end{equation}
weakly converges to $\mu$, as $n\to\infty$, i.e.
\begin{equation}
\label{072905-20}
\lim_{n\to\infty}\int_{\Omega_{\cal r}} F
\, d\mu_n= \int_{\Omega_{\cal r}} F  \,
d\mu , \qquad F\in C_b(\Omega_{\cal r}).
\end{equation}
\end{enumerate}
\end{thm}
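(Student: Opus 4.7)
The plan is to combine Lemma \ref{lm023005-20} with the periodic approximation property (F4) of Hypothesis \ref{H102}, and to measure the discrepancy between the resulting ``shifted'' periodic empirical measures via the L\'evy-Prokhorov metric through an explicit coupling.

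First, invoke Lemma \ref{lm023005-20} to pick $\bar\omega\in\Omega = \mathcal K_{\cal W}$ such that $\bar\mu_M \to \mu$ weakly in $\Omega$. Then apply condition (F4) in Hypothesis \ref{H102} to ${\cal k} = \bar\omega$. This yields strictly increasing sequences $(M'_n),(M_n)\subset \bbN$ with $M'_n<M_n$ and $M'_n/M_n\to 1$, together with a sequence $\omega_n := {\cal k}_n \in \mathcal K_{\cal W}$ such that each $\omega_n$ is $M_n$-periodic in every direction and
\[
\delta_n := \sup_{y\in Q_{M'_n}} {\rm D}_{\cal W}(\tau_y\omega_n,\tau_y\bar\omega) \to 0 \quad\text{as }n\to+\infty.
\]
Periodicity immediately gives assertion (1), since $M_n$-periodicity of ${\cal k}_n$ in each coordinate direction is exactly $\tau_{M_n{\rm e}_i}\omega_n = \omega_n$.

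For assertion (2), I would write
\[
\mu_n - \mu = (\mu_n - \bar\mu_{M_n}) + (\bar\mu_{M_n} - \mu).
\]
The second summand tends to $0$ weakly by Lemma \ref{lm023005-20}, so it suffices to show $\mu_n - \bar\mu_{M_n} \to 0$ weakly. For this, consider the coupling $\pi_n$ on $\Omega\times\Omega$ obtained by pushing forward $\ell$ on $\mathbb T^d_{M_n}$ via $y\mapsto (\tau_y\omega_n,\tau_y\bar\omega)$. Its marginals are $\mu_n$ and $\bar\mu_{M_n}$, respectively. Given any $\eps > 0$, by (F4) we have ${\rm D}_{\cal W}(\tau_y\omega_n,\tau_y\bar\omega)\le \delta_n < \eps$ for all $y\in Q_{M'_n}$ once $n$ is large enough, so
\[
\pi_n\bigl\{(x_1,x_2)\in\Omega^2:{\rm D}_{\cal W}(x_1,x_2)\ge \eps\bigr\} \le 1 - (M'_n/M_n)^d \longrightarrow 0.
\]
A standard coupling argument then yields $d_{LP}(\mu_n,\bar\mu_{M_n})\to 0$, where $d_{LP}$ denotes the L\'evy-Prokhorov distance on $\Omega$: for any Borel $A\subseteq \Omega$,
\[
\mu_n(A) = \pi_n(A\times\Omega) \le \pi_n(A\times A^{\eps}) + \pi_n\{{\rm D}_{\cal W}\ge\eps\} \le \bar\mu_{M_n}(A^\eps) + o_n(1),
\]
and symmetrically. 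Since $(\Omega,{\rm D}_{\cal W}) = (\mathcal K_{\cal W},{\rm D}_{\cal W})$ is Polish by (F1), the L\'evy-Prokhorov distance metrizes weak convergence, so $\mu_n \to \mu$ weakly, which is precisely \eqref{072905-20}.

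The main obstacle, as I see it, is not in the convergence argument itself but in setting up the right framework: one must ensure that condition (F4) is expressed in a metric (${\rm D}_{\cal W}$) that is strong enough to be compatible with weak convergence in $(\Omega,{\rm D}_{\cal W})$ yet Polish, so that the L\'evy-Prokhorov metric is available and the coupling argument goes through. Once the coupling is in place, the $M'_n/M_n\to 1$ clause in (F4) is exactly what is needed to make the boundary contribution negligible, and the uniform ${\rm D}_{\cal W}$-closeness on $Q_{M'_n}$ takes care of the bulk.
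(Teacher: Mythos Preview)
Your proof is correct and follows essentially the same approach as the paper: both invoke Lemma \ref{lm023005-20} to fix $\bar\omega$, apply (F4) to produce the periodic $\omega_n$, and then compare $\mu_n$ with $\bar\mu_{M_n}$ by splitting $Q_{M_n}$ into the bulk $Q_{M'_n}$ (handled via the uniform ${\rm D}_{\cal W}$-closeness) and the boundary layer (handled via $M'_n/M_n\to 1$). The only cosmetic difference is that the paper tests against $F\in U_{{\rm D}}(\Omega)$ directly and invokes \cite[Theorem 1.1.1]{stroock-varadhan}, whereas you package the same estimate as a coupling bound on the L\'evy--Prokhorov distance; both are standard and equivalent on the Polish space $(\mathcal K_{\cal W},{\rm D}_{\cal W})$.
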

\proof Thanks to Lemma \ref{lm023005-20},
there exists $\bar\om\in \Omega_{\cal r}$ such that
\begin{equation}
\label{062905-20M31}
\lim_{M\to\infty}\int_{Q_M}F(\tau_y\bar\om)\,
\ell_M(dy)=\int_{\Omega_{\cal r}}F\, d\mu, \qquad F\in C_b(\Omega_{\cal r}).
\end{equation}
Fix an arbitrary $\rho>0$
and $F\in U_{{\rm d}_{\cal r}}(\Omega_{\cal r})$. 
Let us   consider  increasing  sequences of integers
$(M_n')_{n\in\bbN}$, $(M_n)_{n\ge1}$ and $(\om_n)_{n\in\bbN}\subset \Omega_{\cal r}$
satisfying:
  \begin{itemize}
    \item[-] each $\om_n$ is
      $M_n$-periodic in every direction of the variable $x$;
      \item[-] $M_{n}'<M_n$, $n=1,2,\ldots$ and $\lim_{n\to\infty}\frac{M_{n}'}{M_n}=1$;
    \item[-] 
    {$\lim_{n\to\infty}\sup_{y\in Q_{M_{n'}}}{\rm d}_{\cal r}({  \tau_y\om_n, \tau_y\bar\om})=0$}.
    \end{itemize}
Thanks to \eqref{062905-20M31}, we have
\begin{equation}
\label{062905-20M3}
\lim_{n\to\infty}\left|\int_{Q_{M_n}}F(\tau_y\bar\om)\, \ell_{M_n}(dy)-\int_{\Omega_{\cal r}}F\,
  d\mu\right|=0.
\end{equation}
Since $F\in U_{{\rm d}_{\cal r}}(\Omega_{\cal r})$, there exists $n_0$ such that 
\begin{equation}
\label{013005-20}
|F(\tau_y\om_n)-F(\tau_y\bar\om)|\,< \,\rho,\qquad
y\in Q_{M_n'},\,n\ge n_0.
\end{equation}
Recalling the definition of the measures $\mu_n$ in \eqref{062905-20}, we infer that
\begin{multline}
  \label{010602-24}
\left|\int_{\Omega_{\cal r}}F\, d\mu_n-\int_{\Omega_{\cal r}}F
\, d\mu\right| = \left|\int_{\bbT^d_{M_n}}F(\tau_y\om_n)\, \ell_{M_n}(dy)-\int_{\Omega_{\cal r}}F
\, d\mu\right|\\
\le M_n^{-d}\int_{Q_{M_n}}\left|F(\tau_y\om_n)-F(\tau_y\bar\om)\right|\, dy+\left|\int_{Q_{M_n}}F(\tau_y\bar\om)\, \ell_{M_n}(dy)
-\int_{\Omega_{\cal r}}F
  d\mu\right|
\end{multline}
The second expression on the utmost right hand-side tends to $0$, as $n\to\infty$, thanks to
\eqref{062905-20M3}.
The first one on the other hand can be estimated from
\eqref{013005-20}
by
\[
\rho +\frac{1}{M^d_n}\int_{Q_{M_n}\smallsetminus Q_{M_n'}}\left|F(\tau_y\om_n)-F(\tau_y\om)\right|\, dy\le \,
\rho+ 2\|F\|_\infty\left[1-\left(\frac{M_n'}{M_n}\right)^d\right].
\]
Since $\frac{M_n'}{M_n}\to1$, as $n\to\infty$, the
above estimate together with \eqref{010602-24}
imply that
\[
\limsup_{n\to\infty}\left|\int_{\Omega_{\cal r}}F\, d\mu_n-\int_{\Omega_{\cal r}}F
\, d\mu\right| \le \rho
\]
for any arbitrary $\rho>0$, which in turn implies \eqref{072905-20}
for any $F\in U_{{\rm
    d}}(\Omega_{\cal r})$.  Finally,  another  application of  \cite[Theorem
$1.1.1$]{stroock-varadhan} allows us to conclude the proof of Theorem
\ref{thm:parta}.\qed

\subsection{Construction of an invariant density for the environment process}
\label{sec4.4}
 
Let $(\om_n)_{n\in\bbN}$ and $(M_n)_{n\in\bbN}$ be as in the statement of Theorem
\ref{thm:parta}. {For each $n\in\bbN$, we associate with  $\om_n\in \Omega_{\cal r}$} the
operator $\mathbf q(D;\omega_n)\colon C^2_c(\bbR^d)\to C_0(\bbR^d)$ as in \eqref{qxd} and the corresponding Feller process $(\mathbb{P}^{x;\omega_n})_{x\in\bbR^d}$.
Let $\pi_{n}: \bbR^d\to \bbT^d_{M_n}$ be the canonical projection of
$\bbR^d$ onto $\bbT^d_{M_n}$. Then, the process
$\left(\tilde{\mathbb{P}}^{x;n}\right)_{x\in\bbT^d_{M_n}}$ defined by
$\tilde{\mathbb{P}}^{x;n}:=(\pi_{n})_\sharp\mathbb{P}^{x;\omega_n}$ is
strongly Markovian and Feller, with the transition probability densities given by
 \[\tilde p_{ t}^n(x,y):= \sum_{m\in\bbZ^d}  p_{t}^{\om_n}(x,y+M_nm),\qquad t\ge0, \, x,y\in \bbT^d_{M_n}.
\]
Here, $p_{ t}^{\om_n}(x,y)$ are the transition probability densities corresponding to
the path measure $\mathbb{P}^{x;\omega_n}$. In addition, for any $\tilde f\in C(\bbT^d_{M_n})$ we have
 \begin{equation}
 \label{eq.period1}
 \tilde {\bbE}^{x;n}\tilde f(\tilde X_t)=\bbE^{x;\omega_n} f(X_t),\quad x\in \bbT^d_{M_n},
 \end{equation}
 where $ f\in C_b(\bbR^d)$ is the $M_n$-periodic extension of
 $\tilde f$, $\tilde X_t=\pi_{n}(X_t)$ and $\tilde {\bbE}^{x;n}$ is the expectation corresponding
 to $\tilde {\bbP}^{x;n}$. Moreover, the path measure $\tilde {\bbP}^{x;n}$
 {can be characterized as the unique solution to the martingale problem
associated with the following integro-differential} operator
\[
\tilde L^{n} u(x)=\int_{\bbT^d_{M_n}}(u(x+z)-u(x)-\nabla
u(x)\cdot z\mathds{1}_{\{|z|\le 1\}})\,\tilde{\rm n}_n(x,dz), \qquad u\in C^2(\bbT^d_{M_n}),
\]
with a L\'evy kernel $\tilde {\rm n}_n(x,dz)$ of the form
\[
   \tilde {\rm n}_n(x,dz):=\sum_{m\in\bbZ^d}\frac{\langle  {\bf a}(\tau_x\omega_n)(z+M_nm),(z+M_nm)\rangle}{|z+M_nm|^{d+2+\alpha}}\,dz, \qquad x\in\bbR^d.\]

\begin{proposition}
\label{thm:Inv_density_torus}
\begin{enumerate}
\item[(i)]For each $n\in\bbN$, there exists an invariant density $\phi_n$
for the process $ \left(\tilde{\mathbb{P}}^{x;n}\right)_{x\in\bbT^d_{M_n}}$,
i.e.\ $\phi_n\ge 0$, $\ell_{M_n}$-a.e.\ in $\bbT^d_{M_n}$, $\|\phi_n\|_{L^1(\bbT^d_{M_n})}=1$ and
\begin{equation}
\label{082905-20}
\int_{\bbT^d_{M_n}}\left[\tilde{\mathbb{E}}^{x;n}f(\tilde X_t)\right]
\phi_n(x)\, \ell_{M_n}(dx)= \int_{\bbT^d_{M_n}}f(x)\phi_n(x)\, \ell_{M_n}(dx),\qquad t\ge0, \, f\in B_b(\bbT^d_{M_n}).
\end{equation}
\item[(ii)] Let $(\nu_n)_{n\in\bbN}$ be a sequence of  probability measures on
$(\Omega_{\cal r},{\cal B}(\Omega_{\cal r}))$ defined as
\begin{equation}
  \label{nun}
  \nu_n(A)\, := \int_{\bbT^d_{M_n}}\mathds{1}_{A}(\tau_x\om_n)\phi_n(x)\,
  \ell_{M_n}(dx)
  ,\qquad A\in {\cal B}(\Omega_{\cal r}),
  \end{equation}
with $\phi_n$ as in (i). Then, there exists $c_*>0$, depending only on
  $d,\lambda,\Lambda,\alpha$, such that 
\begin{equation}
\label{eq.tightm}
\int_{\Omega_{\cal r}}F\,d\nu_n\le c_*\|F\|_\infty^{1-\alpha/(2d)}\left
  (\int_{\Omega_{\cal r}} F d\mu_n \right)^{\alpha/(2d)}, \qquad F\in B_b(\Omega_{\cal r}).
\end{equation}
\item[(iii)] 
The sequence $(\nu_n)_{n\in \bbN}$ is tight. Any weak limiting point $\nu_*$
of $(\nu_n)_{n\in\bbN}$ satisfies 
\begin{equation}
\label{eq.tightm1}
\int_{\Omega_{\cal r}}F\,d\nu_*\le c_*\|F\|_\infty^{1-\alpha/(2d)}\left
  (\int_{\Omega_{\cal r}} Fd\mu \right)^{\alpha/(2d)},
\end{equation}
for any non-negative $F\in B_b(\Omega_{\cal r})$,
with $c_*$ as in \eqref{eq.tightm}. In consequence, $\nu_*$
is absolutely continuous with respect to $\mu$ and its density
$\Phi_*=\frac{d \nu_*}{d\mu}$ belongs to any $L^p(\mu)$, where $1\le p<\frac{d}{d-\al/2}$.
\end{enumerate}
\end{proposition}
\begin{proof}
Let $(\om_n)_{n\in\bbN}$, $(M_n)_{n\in\bbN}$  be the sequences  
appearing in Theorem \ref{thm:parta}. 
We define a process
$(\hat{\bbP}^{x;n})_{x\in \bbT^d}$ by letting
$
\hat{\bbP}^{x;n}:= ({\cal T}_{M_n^{-1}})_\sharp \tilde{\mathbb
  P}^{xM_n;n},$ 
where ${\cal T}_\varepsilon$ appeared in \eqref{011903-24}. Such a process corresponds to rescaling of the
process $(\tilde{\bbP}^{x;n})$
on $\bbT^d_{M_n}$, i.e.\ 
$\hat{\bbP}^{x;n}=\pi_\sharp \bbP^{x;\omega_n}_{M_n^{-1}}$, where the
family $(\bbP^{x;\omega}_\varepsilon)$ has been introduced in
\eqref{beq01eps} and $\pi$ is the canonical projection of $\bbR^d$
onto the unit torus $\bbT^d$.

{Let $\hat X_t:=\pi(X_t)$} and let $\hat P_{t}^{(n)} f(x):=\hat
\bbE^{x;n}  f\left(\hat X_t\right)$, $t\ge0$, $x\in \bbT^d$ be the
transition probability semigroup on $B_b(\bbT^d)$ corresponding to
$\left(\hat \bbP^{x;n}\right)_{x\in \bbT^d}$. It is $C_b$-Feller,
thanks to the fact that $(\mathbb{P}^{x;\omega_n})$ is Feller and
\cite[Theorem 1.9]{BSW}. Its respective $1$-resolvent operator is
given by 
\begin{equation}
\label{res-1}
\hat R^{(n)}_{1}f(x)=\int_0^{\infty}e^{-t}\hat P_{t}^{(n)}f(x) dt,\quad x\in\bbT^d,\,f\in B_b(\bbT^d)
\end{equation}

{\bf Proof of (i).} Since $\bbT^d$ is compact and
$(\hat{\mathbb{P}}^{x;n})_{x\in\bbT^d}$ is  $C_b$-Feller,  the existence of an invariant probability measure $\hat m_n$ for
the process $(\hat{\mathbb{P}}^{x;n})_{x\in\bbT^d}$ follows from the
Krylov-Bogoliubov theorem (cf.\ \cite[Theorem 3.1.1]{DZ}).  {By Theorem \ref{ABP}, we know that} $\hat m_n$ is
absolutely continuous with respect to $\ell$, the normalized Lebesgue
measure on $\bbT^d$,  {with a density $\hat\phi_n:=d\hat m_n/d\ell$ such that}
$\|\hat{\phi}_n\|_{L^1(\bbT^d)}=1$. We then write
\begin{equation}
\label{082905-20hat}
\int_{\bbT^d}\hat{\mathbb{E}}^{x;n}\left[ f(\hat X_t)\right]
\hat\phi_n(x)\, \ell(dx)= \int_{\bbT^d}f(x)\hat \phi_n(x)\, \ell(dx),\qquad t\ge0, \, f\in B_b(\bbT^d).
\end{equation}
{Let us denote
$\phi_n:= \hat\phi_n\circ j_{M_n}$, where $j_{M_n}(x):= x/M_n$. It is
a density with respect to the normalized Lebesgue measure on $\bbT^d_{M_n}$} 
We claim that  $\phi_n(x)$ is   invariant   for the process
$(\tilde{\mathbb{P}}^{x;n})_{x\in\bbT^d_{M_n}}$.  Indeed, for any
$f\in  B_b(\bbT^d_{M_n})$, we have  by \eqref{082905-20hat}
\begin{multline*}
\int_{\bbT^d_{M_n}} f(x)\phi_n(x) \,\ell_{M_n}(dx)=
   \int_{\bbT^d}f\circ j_{M_n}^{-1}(x)\hat\phi_n(x) \,\ell(dx)
\\
=\int_{\bbT^d}\hat{\bbE}^{x;n} \Big[f\circ
j_{M_n}^{-1}(\hat X_{ tM_{n}^{-\al}})\Big]\hat\phi_n(x) \,\ell(dx)=\int_{\bbT^d_{M_n}}\tilde {\bbE}^{x;n} \Big[f(\tilde X_{ t})\Big]\phi_n(x)\,\ell_{M_n}(dx),
\end{multline*}
and thus, we have concluded the proof of (i).

{\bf Proof of (ii).} Let $F\in B_b(\Omega_{\cal r})$  be non-negative {and such that} $\|F\|_\infty\le 1$. Then,
\begin{equation}
  \label{010507-24}
\int_{\Omega_{\cal r}}F\,d\nu_n= \int_{\bbT^d_{M_n}}  F(\tau_x\om_n)\phi_n(x)\, \ell_{M_n}(dx)
= \int_{\bbT^d} F(\tau_{M_nx}\om_n) \hat \phi_n(x)\, \ell(dx).
\end{equation}
Since  $\hat \phi_n(x)$ is invariant under the measure $(\hat{\mathbb
  P}^{x;n})_{x\in \bbT^d}$, we have $\big(\hat R^{(n)}_{1}\big)^*\hat \phi_n=\hat
\phi_n$. Using this and Theorem \ref{ABP},  the utmost right-hand side
of \eqref{010507-24} can be rewritten as
\[
\begin{split}
\int_{\bbT^d}F(\tau_{M_nx}\om_n)  \big(\hat R^{(n)}_{1}\big)^*\hat
\phi_n(x)\, \ell(dx) 
&=\int_{\bbT^d}\hat R^{(n)}_{1}(F(\tau_{M_n \cdot }\om_n))(x)\hat \phi_n(x)\, \ell(dx)\\
&
\le  \| \hat R^{(n)}_{1}(F(\tau_{M_n\cdot}\om_n))\|_\infty
\le  \bar C\left (\int_{\bbT^d}F(\tau_{M_nx}\om_n) \ell(dx)\right)^{\alpha/(2d)}\\&
=\bar C\left (\int_{\bbT^d_{M_n}}F(\tau_{x}\om_n)
  \ell_{M_n}(dx)\right)^{\alpha/(2d)}=\bar C\left (\int_{\Omega_{\cal r}} F d\mu_n \right)^{\alpha/(2d)},
\end{split}\]
 {where $\bar C$ is the same constant as in  Theorem \ref{ABP}.} This ends the proof of  \eqref{eq.tightm} when $0\le F\le 1$. The estimate follows  for a general non-negative $F\in B_b(\Omega_{\cal r})$ by
considering $F/\|F\|_\infty$. 

{\bf Proof of (iii). } According to Theorem \ref{thm:parta}, the sequence $(\mu_n)_{n\in \bbN}$ weakly converges to 
$\mu$. It is therefore tight: for any $\eps>0$, there exists a compact
set $K\subset \Omega_{\cal r}$ 
such that $\mu_n(K^c)<\eps$ for any $n\in\bbN$. Hence,  by the already
proved estimate \eqref{eq.tightm}, we show that
\[\nu_n(K^c)\le c_* \mu_n^{\al/(2d)}(K^c)\le  c_*\eps^{\al/(2d)},\]
which proves   tightness of the sequence $(\nu_n)_{n\in \bbN}$. Letting $n\to+\infty$ in \eqref{eq.tightm},  we conclude
\eqref{eq.tightm1} for  $F\in C_b(\Omega_{\cal r})$. Since $\Omega_{\cal r}$ is  Polish, by
the Ulam theorem, the measure $\mu+\nu_\ast$ is Radon and therefore $C_b(\Omega_{\cal r})$ is dense
in $L^1(\mu+\nu_*)$, see
e.g. \cite[Proposition 7.9]{Fol}.  Suppose that $F\in B_b(\Omega_{\cal r})$ and $(F_n)\subset
C_b(\Omega_{\cal r})$ is such that
$\lim_{n\to+\infty}\|F_n-F\|_{L^1(\mu+\nu_*)}=0$. By considering $0\vee
(F_n\wedge \|F\|_\infty)$, we can always assume that $0\le F_n\le
\|F\|_\infty$. Each $F_n$ satisfies \eqref{eq.tightm1}. Letting
$n\to+\infty$, we conclude the estimate for the limiting $F$.
From \eqref{eq.tightm1}, we infer that $\nu_\ast$ is absolutely
continuous with respect to $\mu$. According to the estimate, its density $\Phi_\ast$ satisfies
\begin{equation*}
\la \mu[ \Phi_\ast \ge \la]\le  \int_{\Omega_{\cal r}}\mathbf 1_{[ \Phi_\ast \ge \la]}\Phi_\ast d\mu\le c_* 
 \mu[ \Phi_\ast \ge \la]^{\alpha/(2d)},\quad \la\ge0.
\end{equation*}
Therefore,
\begin{equation*}
\mu[ \Phi_\ast \ge \la]\le  \frac{C}{ \la^{2d/(2d-\alpha)}},
\end{equation*}
for some constant $C>0$. Thus,
\begin{equation}
  \label{eq.tightm2}
  \int_{\Om} \Phi_\ast^p d\mu=p\int_0^{+\infty}\la^{p-1}\mu[ \Phi_\ast \ge
  \la]d\la\le p+C\int_1^{+\infty}\frac{  d\la}{ \la^{1+2d/(2d-\alpha)-p}}<+\infty,
\end{equation}
provided that $1\le p<2d/(2d-\alpha)$,  which ends the proof of part (iii).
\end{proof}

In order to conclude the proof of Theorem \ref{thm:main}, we need the following lemma 
asserting the $C_b$-Feller property for the semigroup generated by the environment process.
\begin{lemma}
  \label{feller}
  The semigroup $({\frak P}_{t})_{t\ge0}$ given by \eqref{Pt}
  is $C_b$-Feller, i.e.
\[
{\frak P}_{t}\left(C_b(\Omega_{\cal r})\right)\subseteq C_b(\Omega_{\cal r}), \qquad t>0.\]
\end{lemma}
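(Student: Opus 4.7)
The plan is to prove sequential continuity: whenever ${\cal k}_n \to {\cal k}$ in $(\mathcal K_{\cal W}, {\rm D}_{\cal W})$, we have ${\frak P}_t F({\cal k}_n) \to {\frak P}_t F({\cal k})$ for every $F \in C_b(\mathcal K_{\cal W})$. Boundedness $\|{\frak P}_t F\|_\infty \le \|F\|_\infty$ is immediate from \eqref{Pt}. The argument rests on a Trotter--Kato type convergence of the Feller semigroups $(P_t^{{\cal k}_n})$ combined with tightness of their one-time marginals, and on the continuity of the shift map furnished by (F3).

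First I would apply Trotter--Kato at the level of generators. By (F1), for every ${\cal h} \in \mathcal K_{\cal W}$ the restriction of $-\mathbf q({\cal h})(\cdot, D)$ to $\mathcal C = C_c^2(\bbR^d)$ is closable and its closure generates the Feller semigroup $(P_t^{\cal h})$, so $\mathcal C$ is a core. By (F2), for every $\eta \in \mathcal C$, $\mathbf q({\cal k}_n)(\cdot, D)\eta \to \mathbf q({\cal k})(\cdot, D)\eta$ in $C_0(\bbR^d)$. The Trotter--Kato theorem (e.g.\ \cite[Theorem 19.25]{Kallenberg}) then yields $P_t^{{\cal k}_n} g \to P_t^{\cal k} g$ in $C_0(\bbR^d)$ for every $g \in C_0(\bbR^d)$, together with tightness in $\bbR^d$ of the family $\{\bbP^{0;{\cal k}_n}(X_t \in \cdot)\}_n$. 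Since each $P_t^{\cal h}$ is Markovian, the limiting marginal is a probability, so by the Portmanteau theorem this upgrades to weak convergence, and hence $\bbE^{0;{\cal k}_n} g(X_t) \to \bbE^{0;{\cal k}} g(X_t)$ for every $g \in C_b(\bbR^d)$.

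Next I would absorb the ${\cal k}$-dependence of the integrand. Set $g_{\cal h}(y) := F(\tilde\tau_y {\cal h})$ for ${\cal h} \in \mathcal K_{\cal W}$; by (F3) and continuity of $F$, each $g_{\cal h} \in C_b(\bbR^d)$ with $\|g_{\cal h}\|_\infty \le \|F\|_\infty$, and ${\frak P}_t F({\cal h}) = \bbE^{0;{\cal h}} g_{\cal h}(X_t)$. Splitting
\[
{\frak P}_t F({\cal k}_n) - {\frak P}_t F({\cal k}) = \bbE^{0;{\cal k}_n}\bigl[g_{{\cal k}_n}(X_t) - g_{\cal k}(X_t)\bigr] + \bigl(\bbE^{0;{\cal k}_n} g_{\cal k}(X_t) - \bbE^{0;{\cal k}} g_{\cal k}(X_t)\bigr),
\]
the second bracket tends to zero by the preceding step. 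Given $\eps > 0$, use tightness to fix $R$ with $\sup_n \bbP^{0;{\cal k}_n}(|X_t| > R) < \eps$; the contribution to the first bracket from $\{|X_t| > R\}$ is bounded by $2\|F\|_\infty \eps$, while the contribution from $\{|X_t| \le R\}$ is bounded by $\sup_{|y| \le R}|F(\tilde\tau_y {\cal k}_n) - F(\tilde\tau_y {\cal k})|$.

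The main obstacle is to show that this last supremum vanishes as $n \to \infty$, which amounts to joint continuity of $({\cal h}, y) \mapsto \tilde\tau_y {\cal h}$ uniform in $y$ on bounded sets. In each of the Examples \ref{ex101}--\ref{ex104}, the Fr\'echet metric ${\rm D}_{\cal W}$ (respectively ${\rm D}_{\cal Q}$, ${\rm D}_{{\cal A}_c}$) is built from shift-compatible local seminorms, giving estimates of the type $\|\tilde\tau_y {\cal k}_n - \tilde\tau_y {\cal k}\|_{\infty, K} \le \|{\cal k}_n - {\cal k}\|_{\infty, K + R}$ valid uniformly in $|y| \le R$. Consequently the set $\bigcup_{n \ge 1}\{\tilde\tau_y {\cal k}_n : |y| \le R\} \cup \{\tilde\tau_y {\cal k} : |y| \le R\}$ is relatively compact in $\mathcal K_{\cal W}$, and uniform continuity of $F$ on this precompact set forces $\sup_{|y| \le R}|F(\tilde\tau_y {\cal k}_n) - F(\tilde\tau_y {\cal k})| \to 0$, closing the argument.
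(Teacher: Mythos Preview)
Your proof is correct and follows essentially the same route as the paper: both invoke (F2) together with \cite[Theorem 19.25]{Kallenberg} to obtain weak convergence of $\bbP^{0;{\cal k}_n}$ to $\bbP^{0;{\cal k}}$, use tightness to restrict $X_t$ to a compact set $K$, and split ${\frak P}_t F({\cal k}_n)-{\frak P}_t F({\cal k})$ into a term handled by weak convergence and a term requiring $\sup_{y\in K}|F(\tau_y{\cal k}_n)-F(\tau_y{\cal k})|\to 0$. For this last step the paper argues via compactness of the set $\{\tau_y\om':y\in K,\ \om'\in\{{\cal k},{\cal k}_1,{\cal k}_2,\ldots\}\}$ and uniform continuity of $F$ thereon, whereas you make the underlying shift-uniformity explicit through the Fr\'echet seminorm structure of the examples; the substance is the same.
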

\begin{proof}
Fix $t>0$ and  $F\in C_b(\Omega_{\cal r})$. Let $(\om_n)_{n\in\bbN}\subset \Omega_{\cal r}$ and $\om\in \Omega_{\cal r}$ be such that ${\rm d}_{\cal r}(\om_n,\om)\to0$, as $n\to\infty$.
Then the respective path measures $\bbP^{0,\om_n}$ converge to  $\bbP^{0,\om}$, weakly over ${\cal D}$.
Tightness of the laws of $(X_t)$ under $\bbP^{0;\om_n}$   implies in
particular that for any $\eps>0$ 
there exists a compact set $K\subset\bbR^d$ such that
\[\bbP^{0;\om_n}\left(X_t\not\in K\right)<\eps,\,n\ge1\quad\mbox{and}\quad \bbP^{0,\om}\left(X_t\not\in K\right)<\eps.\]
Then,
\[| {\frak P}_{t} F(\om_n)- {\frak P}_{t} F(\om)|\le \big|\bbE^{0;\om_n}[F(\tau_{X_t}\om_n),\, X_t\in K]
-\bbE^{0;\om}[F(\tau_{X_t}\om),\,X_t\in K]\big|+2\eps\|F\|_\infty.\]
The function $F$, when restricted to 
the compact set ${\cal K}(K):=\{\tau_y(\om)\colon y\in K,\,\om\in {\cal K}\}$, where
${\cal K}:=\{\om,\om_1,\om_2,\ldots\}$, is uniformly
continuous. Since ${\rm d}_{\cal r}(\om_n,\om)\to0$, we have
\begin{equation}
  \label{041910-23}
\lim_{n\to\infty}\big|\bbE^{0;\om_n}[F(\tau_{X_t}\om_n),\,X_t\in K]-\bbE^{0;\om_n}[F(\tau_{X_t}\om),\,X_t\in K]\big|=0.
\end{equation}

The function $y\mapsto F(\tau_y\om)$ is bounded and continuous on
$\bbR^d$ for each fixed $\om$. Since $(\bbP^{0;\om})$ is a Feller, c\`adl\`ag process, its paths are quasi-left-continuous, see e.g.\ \cite[Proposition 1.2.7, p. 21]{bertoin}. 
Thus,
 $\bbP^{0;\om}[F(\tau_{X_t}\om)=F(\tau_{X_{t-}}\om)]=1$. From
  the above and the fact that
$\bbP^{0;\om_n}$ weakly converge to $\bbP^{0;\om}$ in $\mathcal{D}$, 
\cite[Theorem 2.7]{bil} implies that
\begin{equation}
  \label{051910-23}
\lim_{n\to\infty}\big[\bbE^{0;\om_n}[F(\tau_{X_t}\om)]-\bbE^{0;\om}[F(\tau_{X_t}\om)]\big]=0.
\end{equation}



 


Hence,
\begin{equation}
  \label{061910-23}
\limsup_{n\to\infty}\big|\bbE^{0;\om_n}[F(\tau_{X_t}\om),\,X_t\in
K]-\bbE^{0;\om}[F(\tau_{X_t}\om),\,X_t\in K]\big|\le 2\eps \|F\|_\infty.
\end{equation}
Summarizing, we have shown
\[
\limsup_{n\to\infty}|{\frak P}_{t}F(\om_n)-{\frak P}_{t}F(\om)|\le
4\eps\|F\|_\infty,\quad  \eps>0. \]
Thus, $|{\frak P}_{t}F(\om_n)-{\frak P}_{t}F(\om)|\to 0
$, as $n\to\infty$,
and the conclusion of the lemma follows. 
\end{proof}


\subsection{Conclusion of the proof of Theorem \ref{thm:main}}

\label{sec4.5}

We show that  any
weak limiting point $\nu_*$ for the sequence $(\nu_n)_{n\in\bbN}$, defined in \eqref{nun},
is invariant for the  {environment} process  $(\eta_t)_{t\ge 0}$  introduced in \eqref{eta-t}. Let us fix $F$ in $C_b(\Omega_{\cal r})$. We have
\[
 \int_{\Omega_{\cal r}}{\frak P}_{t} F(\om)\, \nu_n(d\om)= \int_{\Om}\bbE^{0;\om} F(\eta_t(\om))\, \nu_n(d\om)=
  \int_{\bbT^d_{M_n}}  \bbE^{0;\tau_x\om_n} F\left(\tau_{X_t}\tau_x\om_n\right)\phi_n(x)\, \ell_{M_n}(dx) , \qquad n \in
  \bbN.
\]
By virtue of \eqref{022905-20} and \eqref{eq.period1} we can 
rewrite the utmost right-hand side  as
\begin{align*} \int_{\bbT^d_{M_n}}  \bbE^{x; \om_n}
  F\left(\tau_{X_t}\om_n\right)\phi_n(x)\, \ell_{M_n}(dx) = \,
  \int_{\bbT^d_{M_n}}\tilde \bbE^{x;n} F\left(\tau_{\tilde X_t}
    \om_n\right)\phi_n(x)\, \ell_{M_n}(dx).
\end{align*}
Using
\eqref{082905-20},
we conclude that the right hand side equals  
 \begin{align*}
  \, \int_{\bbT^d_{M_n}}F\left(\tau_x\om_n\right)\phi_n(x)\,
  \ell_{M_n}(dx)= \int_{\Omega_{\cal r}}F\left(\om\right)\nu_n(d\om).
 \end{align*}
Since $ {\frak
  P}_{t}F\in C_b(\Omega_{\cal r})$, see Lemma \ref{feller}, from the weak convergence of
$(\nu_n)$ to $\nu_*$ and the above argument  
\[
 \int_{\Omega_{\cal r}} {\frak
  P}_{t}F\, d\nu_*= \lim_{n\to\infty}\int_{\Omega_{\cal r}}{\frak
  P}_{t}F\, d\nu_n\,
=\, \lim_{n\to\infty}\int_{\Omega_{\cal r}}F \, d\nu_n\,=\,
\int_{\Omega_{\cal r}}Fd\nu_* ,
\]
which proves the invariance
of any weak limiting point $\nu_*$.

Thanks to part  (iii) of  Proposition \ref{thm:Inv_density_torus}, $\nu_*$ -  any  weak limiting point
$(\nu_n)_{n\ge1}$ - is absolutely continuous w.r.t.\ $\mu$.
We
claim that $\Phi_*:=\frac{d\nu_*}{d\mu}>0$, $\mu$-a.s.\ in $\Omega_{\cal r}$. 
Indeed, let $A:=\{\om\in
\Omega_{\cal r}\colon \Phi_*(\om)= 0\}$. Clearly, $\mu(A)<1$, as $\Phi_*$ is a
density w.r.t.\ $\mu$. Suppose that
$\mu(A)>0$.  Let
\begin{equation}
  \label{022409-24}
  \phi_A(y;\om)=\mathds{1}_A(\tau_{y}\om)\quad\mbox{and}\quad
  B^\om:=\{y\in \bbR^d\colon\tau_{y}\om\in A\}.
\end{equation}
We have, see \eqref{032310-23}-\eqref{012812-23},
\begin{multline*}
0=\int_0^\infty e^{-t} \,dt\left(\int_{\Omega_{\cal r}}\mathds{1}_A\Phi_*\,  d\mu\right) 
=   \int_0^\infty e^{-t} \,dt\left(\int_{\Omega_{\cal r}}{\frak P}_{t} \mathds{1}_A\Phi_*\,  d\mu\right)\\
= \int_{\Omega_{\cal r} } \Phi_*(\om)\mu(d\om) \int_0^\infty e^{-t} P_{t}^{\om}\phi_A (0;\om)dt=\int_{A^c }  
R_{1}^{\om}\phi_A (0;\om) \Phi_*(\om)\mu(d\om).
\end{multline*}
Thus,
\begin{equation}
  \label{012409-24}
R_{1}^{\om}\phi_A (0;\om)=0\quad\mbox{ for $\mu$-a.e.
  $\om\in A^c$.}
\end{equation}
Let $(x_n)$ be a dense subset of $\bbR^d$ and $\nu$ be Borel
probability measure on $\bbR^d$ given by 
$\nu(dy):=\sum_{n=1}^\infty2^{-n}\delta_{x_n}(dy).$
We   let ${\frak m}:=\nu R_{1}$, i.e.
\begin{equation}
  \label{rm}
\int_{\bbR^d}f\,d{\frak m}= \int_{\bbR^d}R_1f\,d\nu,\qquad f\in B_b(\bbR^d).
 \end{equation}
Thanks to Proposition \ref{prop012812-23} condition \eqref{012409-24} implies
that for $\mu$-a.e.\ $\om\in A^c$, ${\frak m}(B^\om)=0$, where $B^\om$
is defined in \eqref{022409-24}. In other words
$$
\int_{\bbR^d}\mu\left(\tau_{-y}(A)\setminus A\right) {\frak m}(dy)=0.
$$
Therefore, there
  exists a Borel subset $Z$ of $\bbR^d$ such that $\frak{m}(Z)=1$ and
  $\mu\left(\tau_{-y}(A)\setminus A\right)=0$
for all $y\in Z$. According to  Proposition \ref{rem.appb}, the set $Z$ is dense in
$\bbR^d$. Thanks to \eqref{eq.inv} and the continuity of  
$x\mapsto 1_A\circ\tau_x$ in $L^1(\mu)$, it  follows that
$\mu\left(\tau_y(A)\Delta A\right)=0$ for all $y\in \bbR^d$, where
$\Delta$ denotes the symmetric difference of sets.
This in turn
implies that $\mu(A)\in\{0,1\}$, due to ergodicity of $\mu$ under $\tau_x$,
which contradicts the assumption that $0<\mu(A)<1$.
Thus, we have shown that  $\Phi_*>0$, $\mu$-a.s.\ in $\Omega_{\mathfrak{r}}$. Actually, the above argument proves that the density of any invariant measure, which is absolutely continuous with respect to $\mu$, has to
be strictly positive, $\mu$ a.s.\ in in $\Omega_{\mathfrak{r}}$.
Ergodicity of $\nu_*$ is then an easy consequence of this fact. Indeed, if there had existed $A$ such that $\nu_*(A)\in (0,1)$  and
$\mathds{1}_A\left(\eta_t(\om)\right) =\mathds{1}_A(\om)$,
$\nu_*$-a.s.\ in $\Omega_{\cal r}$, then  both
measures $\nu_*(A)^{-1}\mathds{1}_A\Phi_*d\nu_*$ 
and $\nu_*(A^c)^{-1}\mathds{1}_{A^c}\Phi_*d\nu_*$
would have been invariant and of disjoint supports, which leads
to a contradiction. This ends the proof of Theorem \ref{thm:main}.\qed

\subsection{ Proof of Theorem \ref{thm:main} in the general case }
\label{sec.main.gen}

In the present section we shall dispense with the additional regularity assumptions on the coefficients
that has been  made in the previous section.
Inspecting the proof of Lemma  \ref{feller} we can see that the
argument    required only that  $(\om_n)_{n\in\bbN}\subset \Omega^{\cal M}$ and $\om\in \Omega^{\cal M}$.
Therefore, we can conclude the following variant of the lemma.
\begin{lemma}
  \label{feller1}
Suppose that $F\in C_b(\Om^{\mathcal M})$ and $t\ge0$. Then ${\frak P}_{t}F _{|\Om^{\mathcal
    M}}\in C_b(\Om^{\mathcal M})$.
\end{lemma}

Let 
\[
I_{\cal r}(\omega)(x):= {\cal r}{\rm I_d}+(j_{{\cal r}}\ast \omega)(x),\quad x\in\bbR^d,\, \omega\in \Omega,
\]
where $j_{\cal r}$ is a standard smooth mollifier, i.e. $j_{\cal
  r}(x)={\cal r}^{-d} j(x/{\cal r})$, with $j\in
C_c^\infty(\bbR^d)$, non-negative and $\int_{\bbR^d}j(x)dx=1$.
We have $I_{\cal r}:\Omega\to \Omega_{{\cal r}}$. 
Let $\omega_{\cal r}:= I_{\cal r}(\omega)$ and   $\mu^{\cal
  r}=(I_{\cal r})_{\sharp}\mu$.
We see that $\sup_{\omega\in \Omega}{\rm d}\big(I_{\cal
  r}(\omega),\omega\big)\to0$ if $\cal r\downarrow 0$. 
  Recall that by \cite[Theorem 8.3.7]{cohn}, $\Omega_{{\cal r}}$ is a Borel measurable subset of $\Omega$. 
Each measure   $\mu^{\cal r}$ can be extended to $\mathcal B(\Omega)$ by letting
$\mu^{\cal r}\big(\Omega\setminus \Omega_{\cal r}\big)=0$. 
Moreover,  the space $\Omega^{\cal M}$, metrized with the
metric obtained by the restriction of ${\rm d}$, is 
separable.

By Theorem \ref{thm:main}, for each $\cal r>0$, there exists an invariant,  ergodic measure
$\mu_*^{\cal r}$ on $\Omega_{\cal r}$ for the semigroup $(\mathfrak
P_{t})_{t\ge 0}$ on $\Omega_{\cal r}$. The measure is absolutely continuous
w.r.t.\  $\mu^{\cal r}$ and 
$d\mu_*^{\cal r} =\Phi_*^{\cal r}\,d\mu^{\cal r}$, where $\Phi_*^{\cal
  r}$ is positive $\mu^{\cal r}$-a.s.\ in $\Omega_{\cal r}$.
We extend $\mu_*^{\cal r}$ to $ {\cal B}(\Omega)$ by letting it equal to
zero on $\Omega\setminus \Omega_{\cal r}$. Observe that  $\mu^{\cal r}$ converge weakly to $\mu$, as ${\cal r}\downarrow0$. 
By  \eqref{eq.tightm1}, we have for any non-negative $F\in B_b(\Omega)$,
\[
\begin{split}
\int_{\Omega}F\,d\mu^{\cal r}_*&= \int_{\Omega_{\cal r}}F\,d\mu^{\cal r}_*
\le c\|F\|_\infty^{1-\frac{\alpha}{2d}}\left [\int_{\Omega_{\cal r}} F(\omega)\mu^{\cal r}(d\omega)
\right]^{\frac{\alpha}{2d}}
=c\|F\|_\infty^{1-\frac{\alpha}{2d}}\left(\int_{\Omega} F(I_{\cal r}(\omega))\mu(d\omega)
\right)^{\frac{\alpha}{2d}},
\end{split}
\]  
with $c$ independent of ${\cal r}\in (0,1]$.
Thus, up to a sub-sequence, $\mu^{\cal r}_*$ converges weakly, as ${\cal r}\to0$,  over ${\Om}$ to some measure $\mu_*$ which satisfies
\[
 \int_{\Omega}F\,d\mu _* \le c\|F\|_\infty^{1-\frac{\alpha}{2d}}\left
   [\int_{\Omega} Fd\mu 
\right]^{\frac{\alpha}{2d}},\]
for all non-negative $F\in B_b(\Omega)$. This, in turn, implies that $\mu_*$ is absolutely continuous with respect to $\mu$ and $\Phi_*:= d\mu_*/d\mu$ belongs to $L^p(\mu)$ for any $p\in [1,d/(d-\al/2))$ (cf.\ the proof of Proposition \ref{thm:Inv_density_torus}(iii)). Since ${ \Om}^{\cal M}$ is dense in ${  \Om}$, we infer that measures $\mu^{\cal r}_*$, restricted to   ${ \Om}^{\cal M}$,  converge weakly, as ${\cal r}\to0$, to the restriction of $\mu_*$. Indeed,  any $F\in C_b( {\Om}^{\cal M})$ that is uniformly continuous can be uniquely extended to $\tilde F\in C_b({\Om})$. Therefore (with some abuse of notation, we denote by the same symbol for measures and their restrictions) 
$$
\lim_{{\cal r}\downarrow0}\int_{{  \Om}^{\cal M}}Fd \mu^{\cal
  r}_*=\lim_{{\cal r}\to0}\int_{{  \Om}}\tilde Fd \mu^{\cal r}_*=
\int_{{  \Om}}\tilde Fd \mu_*=\int_{{  \Om}^{\cal M}}Fd \mu_*.
$$
By virtue of \cite[Theorem 2.1]{bil} the above implies the  
convergence in question.
Consequently, by Lemma \ref{feller1}
we conclude that
\[ \int_{\Omega}\mathfrak P_{t} F\,d\mu_*=\int_{\Omega^{\cal M}}\mathfrak P_t F\, d\mu_*= \lim_{{\cal r}\downarrow0}\int_{\Omega^{\cal M}}\mathfrak P_{t} F\,d\mu_*^{\cal r} =\lim_{{\cal r}\downarrow0}\int_{\Omega_{\cal r}}\mathfrak P_{t} F\,d\mu_*^{\cal r}.\]
Using the version of Theorem \ref{thm:main}  already proved on $\Om_{\cal r}$ we infer that the utmost right hand side equals
\[ \lim_{{\cal r}\downarrow0}\int_{\Omega_{\cal r}} F\,d\mu_*^{\cal r}=\lim_{{\cal r}\downarrow0}\int_{\Omega } F\,d\mu_*^{\cal r}=\int_{\Omega} F\,d\mu_*. \]

The proof that $\mu_*$ is ergodic can be conducted in the same way as 
in  Section  \ref{sec4.5}.

\appendix

\section{ Aleksandrov-Bakelman-Pucci estimates}
\label{sec-abp}

Let $(\bbP^x)_{x\in\bbR^d}$ be a Feller process corresponding to the generator $L$ of the form \eqref{Lom}, where
the matrix valued function $x\mapsto a(x)=[a_{i,j}(x)]_{i,j=1}^d$ satisfies the assumptions
made in Section \ref{sec1}. In the present section we do not assume
that the coefficients $a(x)$ are random.

Let $D\subset \bbR^d$ be  an open set.  Define
the exit time $ \tau_D:{ {\mathcal D}}\to[0,\infty]$ of the canonical
process $( X_t)_{t\ge0}$, see \eqref{eq:can},  from $D$ as
\[
\tau_D:=\inf \left\{t>0\colon X_t\not\in D \right\}.
\]
It is a stopping time,
i.e.\ $\{\tau_D\le t\}\in {\mathcal F}_t$ for any
$t\ge0$. The transition semigroup on $D$ with
the null exterior condition is defined  as
\[
P^D_tf(x):=\bbE^x\left[f( X_t), \, t<\tau_D\right],\quad t\ge 0,\,
x\in D,\,  f\in B_b(D).
\]
Furthermore, for  any $\beta>0$, we introduce the  $\beta$-resolvent of
$L$ on $D$ for any
  $f\in B_b(D)$   by letting
\[
R^D_\beta  f(x):=\int_0^\infty e^{- \beta t}P^D_tf(x)\,dt
=\bbE^x\left[\int_0^{\tau_D}e^{-  \beta t}f( X_t)\,dt\right],\quad x\in D.
\]
If we suppose that
\begin{equation}
  \label{eq:finet}
  \mathbb{E}^x\tau_D<\infty ,\quad x\in D,
  \end{equation}
  then, we can define the resolvent $R^D  f $ also for $\beta=0$.
Without the assumption \eqref{eq:finet} the resolvent $R^D  f(x)$ can be
defined for a non-negative $f$ (not necessarily bounded), but we have
to admit the possibility that it equals $\infty$ for some $x$.

Let
$\pi: \bbR^d\to \bbT^d$
be the canonical projection of $\bbR^d$ onto $\bbT^d$.
Let  $C_{\rm per}(\bbR^d)$ be the space of all continuous
functions which are $1$-periodic in each variable.
There is a one-to-one correspondence between $C_{\rm  per}(\bbR^d)$ and $C(\bbT^d)$, i.e. for every $\tilde f\in
C(\bbT^d)$,
there exists a unique $f\in
C_{\rm
  per}(\bbR^d)$ such that $ f(x)=\tilde f\circ\pi(x)$,
$x\in\bbR^d$.

Let us suppose that the function $x\mapsto a(x)$ is
$1$-periodic, as it is the case discussed in Section
\ref{sec4.4}. Thus, the corresponding transition probability semigroup
$(P_t)_{t\ge0}$ has the following property
\[P_t \Big(C_{\rm per}(\bbR^d)
\Big)\subseteq C_{\rm per}(\bbR^d), \qquad t>0.\]
The  semigroup $(P_t)_{t\ge0}$ induces then a   strongly continuous semigroup $(\tilde P_t)_{t\ge0}$  on
$C(\bbT^d)$, by virtue of \cite[Lemma 1.18]{BSW}.
The respective process $\tilde{\mathbb{P}}^{x}: =(\pi)_\sharp
\mathbb{P}^{x}$, $x\in\bbT^d$, is strongly Markovian and Feller  on
 $\bbT^d$, with transition probabilities given by
 \[
 \tilde P_{t}(x,dy)= \sum_{m\in\bbZ^d}  P_{t}(x,dy+m ),\quad t>0,\, x,y\in \bbT^d.
\]
Here, $P_{t}(x,dy)$ are the transition probabilities corresponding to
the path measure $\mathbb{P}^{x}$.

 In addition, for any $\tilde f\in C(\bbT^d)$, we have
\[\tilde {\bbE}^{x}\tilde f(\tilde X_t)=\bbE^{x} f(X_t),\qquad t\ge0,\, x\in \bbT^d,\]
 where $ f\in C_{\rm per}(\bbR^d)$ is the $1$-periodic extension of
 $\tilde f$, $\tilde X_t=\pi(X_t)$ and $\tilde {\bbE}^{x}$ is the expectation corresponding
 to $\tilde {\bbP}^{x}$.
The $1$-resolvent corresponding to $(\tilde P_t)_{t\ge0}$ is then
given by
\[\tilde R_1\tilde f(x): =\int_0^{\infty}e^{-t}\tilde P_t\tilde
  f(x)dt,\qquad x\in\bbT^d,\, \tilde f\in B_b(\bbT^d).\]

\begin{thm}
\label{ABP}
We have

\[
  \Vert  \tilde  R_1\tilde f \Vert_{L^\infty(\bbT^d)} \le \bar{ C} \Vert
 \tilde  f\Vert_{L^\infty(\bbT^d)}^{(1-\alpha/2)}\Vert
 \tilde  f\Vert_{L^d(\bbT^d)}^{\alpha/2},\qquad \tilde f\in L^p(\bbT^d),\]
where  the constant $\bar C$ depends only on $\alpha,d,\lambda,\Lambda$.
\end{thm}
\proof

Recall that $B(x,r)$ denotes a ball centered at $x$ with radius $r>0$.
Let $D:= \{x\in\bbR^d\colon {\rm dist}(x,Q_1)<1\}$.  Suppose also that
$f\in C_{\rm per}(\bbR^d)$ is such that $ f=\tilde f\circ\pi$.
Using the strong Markov property, we can then write for any $x\in\bbT^d$:
\[
   \tilde  R_1\tilde f  (x)= \mathbb
   E^{x}\left[\int_0^{\tau_{D}}e^{-t}  f( X_t) \, dt\right]
  +\bbE^{x }\left[e^{-\tau_{D}}\int_0^{\infty}e^{-t}\mathbb
    E^{X_{\tau_{D}} }\left[  f( X_t) \right]\, dt\right].
\]
We  have then
\begin{equation}
\label{010807-24}
\sup_{x\in\bbT^d}\vert \tilde  R_1\tilde f  (x)\vert \le
\sup_{x\in\bbT^d} \bbE^{x } \left[\int_0^{\tau_{D}}e^{-s}|  f( X(s))| \, ds\right] +\sup_{x\in\bbT^d} \bbE^{x}\left[e^{-\tau_{D}}
\right]\sup_{x\in\bbT^d} \left|\bbE^{x}\int_0^{\infty}e^{-s}  f( X(s)) \, ds\right|.
\end{equation}
Since
$$
u(x)=\bbE^{x } \left[\int_0^{\tau_{D}}e^{-s}|  f( X(s))|
  \, ds\right]
$$ is a non-negative solution of the equation
\[
(u-Lu)(x)=|f(x)|,\quad x\in D,\quad u(x)=0,\quad x\not \in D,
\]
by \cite[Theorem 1.3]{ABP} there exists a constant $C>0$ such
that the first term on
the right hand side of \eqref{010807-24} can be estimated by 
$$
C \Vert
  f\Vert_{L^\infty(D)}^{(1-\alpha/2)}\Vert
 f\Vert_{L^d(D)}^{\alpha/2} .
 $$
 In consequence, 
 \[
   \sup_{x\in\bbT^d}\vert \tilde  R_1\tilde f  (x)\vert\le\,  C \Vert
  f\Vert_{L^\infty(D)}^{(1-\alpha/2)}\Vert
 f\Vert_{L^d(D)}^{\alpha/2}  +
\sup_{x\in\bbT^d}\bbE^{x }\left[e^{-\tau_{D}}\right] \sup_{x\in\bbT^d}\vert \tilde  R_1\tilde f  (x)\vert.
\]
Since $D\subseteq [-3,3]^d$, we can now use  periodicity of $  f$
  to conclude that
\begin{equation}
  \label{040709-23}
  \gamma
  \sup_{x\in\bbT^d}\vert  \tilde  R_1\tilde f  (x)\vert\le
    3^{d}C \Vert
 \tilde  f\Vert_{L^\infty(\bbT^d)}^{(1-\alpha/2)}\Vert
 \tilde  f\Vert_{L^d(\bbT^d)}^{\alpha/2},
\end{equation}
where
\begin{equation}
  \label{gamma}
 \gamma:=  1-\sup_{x\in\bbT^d}\bbE^{x}\left[e^{-\tau_{D}}\right].
  \end{equation}
Since   $B(x,1)\subseteq D$ for any $x\in\bbT^d$, by
\cite[Proposition 3.8]{Kuhn} we infer that  for each $t\in (0,1]$
\begin{align*}
&1-\bbE^{x }\left[e^{-\tau_{D}}\right] \ge 1-\bbE^{x } \left[e^{-\tau_{B(x,1)}}\right] \ge (1-e^{-t})\mathbb{P}^{x}(\tau_{B(x,1)}>t)\\
&\ge \, (1-e^{-t})\left(1-C(d)tq_*\right).
\end{align*}

Here a positive constant $C(d)$ depends only on the dimension
$d$ and
\[
q_*:=\sup_{x\in\bbT^d}\sup_{y\in B(x,1)}\sup_{|\xi|\le
  1}|q(y,\xi)|\le \Lambda(dC_{d,\al}+c_{d,\al}),
\]
see \eqref{beq01eps}

Finally, if one chooses $t:=(2C(d) \Lambda(dC_{d,\al}+c_{d,\al}))^{-1}$, then
$$
\gamma>\frac12\Big(1-e^{-t}\Big)
$$
and the conclusion of the theorem  follows.
\qed

\section{Some property of the resolvent}
\label{appC}

 {As in the previous section, let $(\bbP^x)_{x\in\bbR^d}$ be a Feller process corresponding to the generator $L$ of the form \eqref{Lom}, where
the matrix valued function $x\mapsto a(x)=[a_{i,j}(x)]_{i,j=1}^d$ satisfies the assumptions
made in Section \ref{sec1}.} By \cite[Theorem 1.3]{ABP}, 
$(\bbP_x)_{x\in\bbR^d}$ is in fact a strongly Feller process.
The set $N_x:=\{y\in\bbR^d: \langle a(x)y,y\rangle=0\}$
is a linear space and by \eqref{eq.MH1}  it  cannot be the entire $\bbR^d$.
As a result $\text{\rm supp}\,{\rm n}(x,\cdot)=\bbR^d$ for each $x\in\bbR^d$. 
Recall that measure ${\frak m}$ has been introduced in  \eqref{rm}.

 \begin{proposition}
 \label{rem.appb}
 {Let $B$ be a Borel subset of $\bbR^d$ such that ${\frak
     m}(B)=0$.} Then $B^c$ is a dense subset of $\bbR^d$.
\end{proposition}
\proof
Suppose by contradiction that there exists an open ball
$B(x_0,r_0)\subseteq B$. Then,
\[
R_1\mathds{1}_{B(x_0,r_0)}(x_n)=0,\quad n\ge 1.
\]
By the strong Feller property of $(\bbP_x)_{x\in\bbR^d}$, the function
$x\mapsto R_1\mathds{1}_{B(x_0,r_0)}(x)$ is
continuous and thus, $R_1\mathds{1}_{B(x_0,r_0)}(x)=0,\, x\in\bbR^d$.
As a result, 
\[
0=R_1\mathds{1}_{B(x_0,r_0)}(x_0)=\bbE^{x_0}\int_0^\infty e^{-t} \mathds{1}_{B(x_0,r_0)}(X_t)\,dt
\ge \bbE^{x_0}\int_0^{\tau_{B(x_0,r_0)}} e^{-t} \,dt,
\]
which leads to a contradiction.
\qed

\begin{proposition}
\label{prop012812-23}
{Let $B$ be a Borel subset of $\bbR^d$ such that  ${\frak m}(B)>0$.} Then,
\[R_1\mathds{1}_B(x)>0,\qquad \mbox{for all }x\in\bbR^d.\]
\end{proposition}
\begin{proof}
Let $g\in C_b(\bbR^d)$ be non-negative. By \cite[Lemma 3.5]{CK} (see also \cite[Proposition 1]{barles-imbert}) $u:=R_1g$
is a viscosity solution to 
\begin{equation}
\label{eq.visA}
-Lu+u=g\quad\text{in }\bbR^d.
\end{equation} 
Let $f\in L^d(\bbR^d)$, and let $(f_n)_{n\in\bbN}\subset C_b(\bbR^d)$ be such that $0\le f_n\le 1,\, n\ge 1$ 
and $\|f_n-f\|_{L^d}\le 1/n,\, n\ge 1$. By \cite[Theorem 1.3]{ABP}, we have for any $r\ge 0$
\[
\sup_{|x|<r}|R_1f_n(x)-R_1f(x)|\to 0,\quad n\to \infty.
\]
Consequently, by \cite[Theorem 1]{barles-imbert}  $R_1f$ is a viscosity supersolution to \eqref{eq.visA}
with $g$ replaced by $0$. By \cite[Theorem 2]{Ciomaga} if $R_1f(x)$ for some $x\in\bbR^d$,
then $R_1f\equiv 0$. By \cite[Theorem 4.10]{KlKo} once we prove that
$e^{-t} {\frak m}P_t \le {\frak m},\, t>0$, then 
the asserted implication follows. The last inequality, however, is a direct consequence of the definition of ${\frak m}$.
\end{proof}

\section{Calculation on Fourier symbol}
\label{appC1}

 Using the spherical change of coordinates $y=\ell
\boldsymbol{\sigma}$, $\ell>0$, $\boldsymbol{\sigma}\in \mathbb S^{d-1}$,  in the integral, we can write
\[\mathbf q(\xi;\omega) =|\xi|^{\al}\int_0^{+\infty}\frac{d\ell}{\ell^{\al+1}}\int_{\mathbb
    S^{d-1}} F(\ell \boldsymbol{\sigma} \cdot\hat\xi) \langle
  {\mathbf a}(\omega) \boldsymbol{\sigma} , \boldsymbol{\sigma}
  \rangle S(d \boldsymbol{\sigma}),\qquad
  (\xi,\om)\in \bbR^{d}\times \Omega,\]
where $F(t)=1 -\cos(t),\, t\in\bbR$, and  $\hat\xi=\xi/|\xi|$.
Observe  that
\[
\begin{split}
\int_{\mathbb S^{d-1}} F(\ell \boldsymbol{\sigma} \cdot\hat\xi) \langle {\mathbf a}(\omega) \boldsymbol{\sigma} , \boldsymbol{\sigma}\rangle S(d \boldsymbol{\sigma})&=\sum_{i\neq j}\int_{\mathbb S^{d-1}} F(\ell \boldsymbol{\sigma} \cdot\hat\xi) {\mathbf a}_{ij}(\omega) \boldsymbol{\sigma}_i\boldsymbol{\sigma}_j S(d \boldsymbol{\sigma})\\
&\qquad\qquad\qquad+\sum_{i=1}^d \int_{\mathbb S^{d-1}} F(\ell \boldsymbol{\sigma} \cdot\hat\xi) {\mathbf a}_{ii}(\omega) (\boldsymbol{\sigma}_i^2-\frac1d|\boldsymbol{\sigma}|^2)S(d \boldsymbol{\sigma})\\
&\qquad\qquad\qquad+ \frac1d {\rm Tr}\Big({\mathbf a}(\omega)\Big) \int_{\mathbb S^{d-1}} F(\ell \boldsymbol{\sigma} \cdot\hat\xi)\, S(d \boldsymbol{\sigma})
\end{split}
\]
 {Note that $\boldsymbol{\sigma}_i\boldsymbol{\sigma}_j$ (for $i\not=j$), 
$\boldsymbol{\sigma}_i^2-\frac{1}{d}|\boldsymbol{\sigma}|^2$  (for $i=1,\ldots,d$)  and the constants 
are harmonic polynomials in $\boldsymbol{\sigma}=(\boldsymbol{\sigma}_1,\ldots,\boldsymbol{\sigma}_d)\in\bbR^d$.} By the
Hecke-Funck theorem, see e.g.\ \cite[p. 181]{Hoch}, we can then conclude that
\[\mathbf q(\xi;\omega) =c_d |\xi|^{\al}\langle
  {\mathbf a}(\omega) \hat\xi, \hat\xi\rangle +C_d {\rm Tr}\Big({\mathbf a}(\omega)\Big)|\xi|^{\al},\]
where
\[
\begin{split}
c_d&=|\mathbb S^{d-1}|\int_0^{+\infty}\frac{d\ell}{\ell^{\al+1}}\int_{-1}^1\big(1-\cos(\ell t)\big)P_2(t)(1-t^2)^{\frac{d-3}{2}}dt,\\
C_d&= |\mathbb B^{d}|\int_0^{+\infty}\frac{d\ell}{\ell^{\al+1}}\int_{-1}^1\big(1-\cos(\ell t)\big)\Big(P_0(t)-P_2(t)\Big)(1-t^2)^{\frac{d-3}{2}}dt 
\end{split}
\]
and $P_n(t)=\cos(n \arccos t)$ is the $n$-th degree Tchebyshev polynomial, while $|\mathbb S^{d-1}|$, $|\mathbb B^{d}|$ denote the area of the unit sphere and the volume of the unit ball in $\bbR^d$, respectively.

\section{Borel measurability of  $\Omega^{\mathcal M}$}
\label{appD}

We shall show  that $\Omega^{\cal M}$, introduced in Section
\ref{sec2.2a}, is a Borel measurable subset of $\Omega$. For any
$x\in\bbR^d$, $\omega\in\Omega$, we denote by $\Pi_{x;\omega}$ the set
of all solutions of the martingale problem for $L^\omega$ with the
initial measure $\delta_x$. By \cite[Corollary 3.2 and Lemma
2.1]{kuhn}, the definition of $\Om$ ensures that the set
$\Pi_{x;\omega}$ is non-empty for any
$(x,\om)\in\bbR^d\times\Omega$. For any  $f\in  C_0(\bbR^d)$ and $\lambda>0$ we  introduce
\[
\pi^{\star}_\lambda(x,f;\omega):=\sup_{\bbP\in\Pi_{x;\omega}}\bbE^{\bbP}\int_0^\infty e^{-\lambda t}f(X_t)\,dt,\qquad
\pi_{\star}^\lambda(x,f;\omega):=\inf_{\bbP\in\Pi_{x;\omega}}\bbE^{\bbP}\int_0^\infty e^{-\lambda t}f(X_t)\,dt.
\]
By \cite[Theorem 4.1]{Kuhn}  for any $f\in C_0(\bbR^d)$,
$\omega\in\Omega$ and $\lambda>0$ there exists a strong Markov process
$(\bbQ^{x;\omega}_{f,\lambda}, x\in\bbR^d)$ such that \tk{each
    $\bbQ^{x;\omega}_{f,\lambda}\in \Pi_{x;\omega}$  and}
\[
\pi^{\star}_\lambda(x,f;\omega)=\bbE^{\bbQ^{x;\omega}_{f,\lambda}}\int_0^\infty e^{-\lambda t}f(X_t)\,dt,\quad
\pi_{\star}^\lambda(x,f;\omega)=\bbE^{\bbQ^{x;\omega}_{f,\lambda}}\int_0^\infty e^{-\lambda t}f(X_t)\,dt.
\]
From the above formulas, one  infers that
$\pi^{\star}_\lambda(x,f;\cdot)$ (resp. $\pi_{\star}^\lambda
(x,f;\cdot)$) is upper (resp. lower) semi-continuous.
Hence, both of these functions  {are Borel measurable in $\omega\in \Omega$}. Let $S$ be a countable and dense subset of $(0,\infty)\times \bbR^d\times C_0(\bbR^d)$. Observe that
\[\Omega^{\cal M}=\{\omega\in\Omega: |\pi^{\star}_\lambda(x,f;\omega)-
  \pi_{\star}^\lambda (x,f;\omega)|=0,\quad  (\lambda,x,f)\in S\}.\]
This implies  that {$\Omega^{\cal M}$ is a Borel measurable subset of $\Omega$.}

\end{document}